\newtheorem{thm}{Theorem}[section]
\theoremstyle{definition}
\newtheorem{defn}[thm]{Definition}
\newtheorem{example}[thm]{Example}
\theoremstyle{remark}
\newtheorem{rem}[thm]{Remark}
\numberwithin{equation}{section}
\newcommand{\NN}{\mathbb N}
\newcommand{\DD}{\mathcal D}
\begin{document}
\title[(Q-)(C-EDS) and (q-)(C-EUDS) in locally convex spaces]{Quasi-equicontinous exponential families of generalized function $C$-semigroups in locally convex spaces}
\author{Marko Kosti\' c}
\address{Faculty of Technical Sciences,
University of Novi Sad,
Trg D. Obradovi\' ca 6, 21125 Novi Sad, Serbia}
\email{marco.s@verat.net}

\author{Stevan Pilipovi\' c}
\address{Department for Mathematics and Informatics,
University of Novi Sad,
Trg D. Obradovi\' ca 4, 21000 Novi Sad, Serbia}
\email{pilipovic@dmi.uns.ac.rs}

\author{Daniel Velinov}
\address{Department for Mathematics, Faculty of Civil Engineering, Ss. Cyril and Methodius University, Skopje,
Partizanski Odredi
24, P.O. box 560, 1000 Skopje, Macedonia}
\email{velinovd@gf.ukim.edu.mk}

{\renewcommand{\thefootnote}{} \footnote{2010 {\it Mathematics
Subject Classification.} 47D06, 47D60, 47D62, 47D99, 45N05.
\\ \text{  }  \ \    {\it Key words and phrases.} $C$-distribution semigroups, $C$-ultradistribution semigroups,
integrated $C$-semigroups, convoluted $C$-semigroups, well-posedness, locally convex spaces.
\\  \text{  }  \ \ This research is partially supported by grant 174024 of Ministry
of Science and Technological Development, Republic of Serbia.}}

\begin{abstract}
Our main goal in this paper is to investigate the (q-)exponential $C$-distribution semigroups and (q-)exponential $C$-ultradistribution semigroups in the setting of sequentially complete locally convex spaces.
We contribute to previous work and the work of many other authors, providing additionally plenty of various examples and applications of obtained results.
\end{abstract}
\maketitle

\section{Introduction and Preliminaries}
Theory of non-densely defined infinitesimal generators  for ultradistribution, hyperfunction semigroups, $C$-regularized generalized semigroups can be found in \cite{fat1}, \cite{knjigah}, \cite{ko98}, \cite{diff1}, \cite{ptica}, \cite{tica}, \cite{ku112}, \cite{lar}  \cite{me152}, \cite{mija}, \cite{w241}, while for various subclasses of $K$-convoluted $C$-semigroups in Banach and locally convex spaces we refer to \cite{a43}, \cite{b41}, \cite{b42}, \cite{c51}, \cite{d81}, \cite{l1}, \cite{hiber1}, \cite{klm1}, \cite{knjigah}-\cite{segedin}, \cite{kuo1}, \cite{l114}, \cite{me152} and \cite{x263}; concerning
the theory of locally equicontinuous $C_{0}$-semigroups in locally convex spaces, special attention should be made of
\cite{choe}, \cite{dembart}, \cite{komatsulocally}, \cite{komura}, \cite{moore}
(see the monographs \cite{a43}, \cite{engel}, \cite{fat1}, \cite{hf} and \cite{pa11} for the Banach space case). The study of distribution semigroups in locally convex spaces has been initiated by R. Shiraishi, Y. Hirata \cite{1964}, T. Ushijima \cite{ush1}. For the best knowledge of the authors, there is no reference which treats ultradistribution semigroups in locally convex spaces, except \cite{cdscuds}. In this paper we continue to study such semigroups by the investigation of a class of quasi-equicontinuous exponential (q-exponential) $C$-distribution and ultradistribution semigroups, see Section 2.

We furnish several illustrative examples of application of our theoretical results in Fr\' echet spaces in Section 3. Actually, Section 3 gives motivation for this abstract setting of the theory.

The notation in this paper is standard.
By $E$ we denote a Hausdorff sequentially complete
locally convex space over the field of complex numbers, SCLCS for short.
The fundamental system of seminorms which defines the topology of $E$ is denoted by $\circledast_{E}$.
By $L_{\circledast}(E)$ we denote the subspace of $L(E)$ containing all continuous linear mappings $T$ from $E$ into $E$ such that for each $p\in \circledast$ there exists $c_{p}>0$ such that
$p(Tx)\leq c_{p} p(x),$ $x\in E.$
Denote by ${\mathcal B}$ the family of bounded subsets of $E,$ and
let $p_{B}(T):=\sup_{x\in B}p(Tx),$ $p\in \circledast_{X},$ $B\in
{\mathcal B},$ $T\in L(E,X).$ Then $p_{B}(\cdot)$ is a seminorm\index{seminorm} on
$L(E,X)$ and the system $(p_{B})_{(p,B)\in \circledast_{X} \times
{\mathcal B}}$ induces the Hausdorff locally convex topology on
$L(E,X).$
Set
$p_{A}(x):=p(x)+p(Ax),$ $x\in D(A),$ $p\in \circledast$.
Then the calibration $(p_{A})_{p\in \circledast}$ induces the Hausdorff sequentially complete locally convex topology on $D(A);$
we denote this space simply by $[D(A)].$\\
Define
$\Sigma_{\alpha}:=\{ z\in {\mathbb C} \setminus \{0\} :
|\arg (z)|<\alpha \}$ ($\alpha \in (0,\pi]$). If $f:\mathbb{R}\to\mathbb{C}$, put $(\tau_tf)(s):=f(s-t)$, $s\in\mathbb{R}$, $t\in\mathbb{R}$. The exponential region $E(a,b)$ has been defined for the first time by W. Arendt, O. El--Mennaoui and V. Keyantuo \cite{a22} as follows:
$$
E(a,b):=\Bigl\{\lambda\in\mathbb{C}:\Re\lambda\geq b,\:|\Im\lambda|\leq e^{a\Re\lambda}\Bigr\} \ \ (a,\ b>0).
$$
If $X$ is a general topological vector space,
then a function $f :
\Omega \rightarrow X,$ where $\Omega$ is an open subset of ${\mathbb
C},$ is said to be analytic if it is locally expressible in a
neighborhood of any point $z\in \Omega$ by a uniformly convergent
power series with coefficients in $X.$

For the definition and more important properties of vector-valued distribution spaces see
\cite{a43}, \cite{fat1}, \cite{komatsu}, \cite{knjigah}, \cite{martinez}, \cite{meise},
 \cite{sch16} and references cited therein.
The Schwartz spaces of test functions are defined by $\mathcal{D}=C_0^{\infty}(\mathbb{R})$
and $\mathcal{E}=C^{\infty}(\mathbb{R})$
with usual inductive limit topologies.
The topology
of the space of rapidly decreasing functions $\mathcal{S}$ defines the following system of seminorms
$$
p_{m,n}(\psi):=\sup_{x\in\mathbb{R}}\, \bigl|x^m\psi^{(n)}(x)\bigr|,\quad
\psi\in\mathcal{S},\ m,\ n\in\mathbb{N}_0.
$$
We denote by $\mathcal{D}_{0}$ the subspace of $\mathcal{D}$ consisting of those functions $\varphi \in \mathcal{D}$ for which supp$(\varphi) \subseteq [0,\infty)$.
The spaces
$\mathcal{D}'(E):=L(\mathcal{D},E)$,
$\mathcal{E}'(E):=L(\mathcal{E},E)$ and
$\mathcal{S}'(E):=L(\mathcal{S},E)$
are topologized in the very obvious way.
$\mathcal{D}'_{0}(E)$,
$\mathcal{E}'_{0}(E)$ and $\mathcal{S}'_{0}(E)$ denote the subspaces of
$\mathcal{D}'(E)$, $\mathcal{E}'(E)$ and $\mathcal{S}'(E)$,
respectively, containing $E$-valued distributions
whose supports are contained in $[0,\infty)$.

Let $(M_p)$ be a sequence of positive numbers. We impose following conditions on $(M_p)$:\\
$(M.1)$ (Logarithmic convexity) $M^2_{p}\leq M_{p-1}M_{p+1}$ for $p\in\NN$;\\
$(M.2)$ (Stability under ultradifferential operators) For some $A,H\geq0$
$$M_p\leq AH^p\min_{0\leq q\leq p}M_{p-q}M_q,\quad p,q\in\NN;$$
$(M.3)$ (Strong non-quasi-analyticity) $$\sum\limits_{p=q+1}^{\infty}\frac{M_{p-1}}{M_p}\leq Aq\frac{M_q}{M_{q+1}},\quad q\in\NN.$$
$(M.2)'$ (Stability under differential operators) For some $A,H>1$ $$M_{p+1}\leq AH^{p+1}M_p,\quad p\in\NN;$$
$(M.3)'$ (Non-quasi-analyticity) $$\sum\limits_{p=1}^{\infty}\frac{M_{p-1}}{M_p}<\infty.$$
Let $(M_p)$ be a sequence of positive numbers satisfying $(M.1)$, $(M.2)$ and $(M.3)$. The spaces of Beurling,
respectively, Roumieu ultradifferentiable functions are
defined by $\mathcal{D}^{(M_p)}:=\mathcal{D}^{(M_p)}(\mathbb{R})
:=\text{indlim}_{K\Subset\Subset\mathbb{R}}\mathcal{D}^{(M_p)}_K$,
respectively,
$\mathcal{D}^{\{M_p\}}:=\mathcal{D}^{\{M_p\}}(\mathbb{R})
:=\text{indlim}_{K\Subset\Subset\mathbb{R}}\mathcal{D}^{\{M_p\}}_K$,
where
$\mathcal{D}^{(M_p)}_K:=\text{projlim}_{h\to\infty}\mathcal{D}^{M_p,h}_K$,
respectively, $\mathcal{D}^{\{M_p\}}_K:=\text{indlim}_{h\to 0}\mathcal{D}^{M_p,h}_K$,
\begin{align*}
\mathcal{D}^{M_p,h}_K:=\bigl\{\phi\in C^{\infty}(\mathbb{R}): \text{supp}(\phi) \subseteq K,\;\|\phi\|_{M_p,h,K}<\infty\bigr\}
\end{align*}
and
\begin{align*}
\|\phi\|_{M_p,h,K}:=\sup\Biggl\{\frac{h^p\bigl|\phi^{(p)}(t)\bigr|}{M_p} : t\in K,\;p\in\mathbb{N}_0\Biggr\}.
\end{align*}
The asterisk $*$ stands for the Beurling case $(M_p)$ or
for the Roumieu case $\{M_p\}$. Let $\emptyset \neq \Omega \subseteq {\mathbb R}.$
The space
$\mathcal{D}'^*(E):=L(\mathcal{D}^*, E)$ is the space of all continuous linear mappings from $\mathcal{D}^*$ into $E$; $\mathcal{D}^{*}_{0}$
denotes the
subspace of $\mathcal{D}^*$ containing ultradifferentiable functions of $\ast$-class whose supports are compact subsets of
$[0,\infty)$, while the symbol $\mathcal{E}'^{*}_{0}$ denotes the space consisting of all
scalar valued ultradistributions of $\ast$-class whose supports are compact subsets of
$[0,\infty)$. Similarly are defined $E$-valued analogues of the spaces $\mathcal{D}^{*}_{0}$ and $\mathcal{E}'^{*}_{0}$.
An entire function of the form
$P(\lambda)=\sum_{p=0}^{\infty}a_p\lambda^p$,
$\lambda\in\mathbb{C}$ is of class $(M_p)$, respectively, of
class $\{M_p\}$, if there exist $l>0$ and $C>0$, respectively, for every
$l>0$ there exists a constant $C>0$, such that $|a_p|\leq Cl^p/M_p$,
$p\in\mathbb{N};$ cf. \cite{k91} for further information.

\section{(Q-)Exponential C-distribution semigroups and (q-)exponential
C-ultradistribution semigroups in locally convex spaces}\label{qjua-a}
We recall the definition of the $C$-distribution semigroups and $C$-ultradistribution semigroups \cite{cdscuds}:

Let $C\in l(E)$ be and injective operator, $\mathcal{G}\in\mathcal{D}_0'(L(E))$ ($\mathcal{G}\in\mathcal{D}_0'^{\ast}(L(E))$) satisfy $C\mathcal{G}=\mathcal{G}C,$
and let $\mathcal{G}$ be boundedly equicontinuous, i.e. if for every continuous seminorm $p$ in $E$ and bounded set $B$ in $\DD^{\ast}$  there exists a continuous seminorm $q$ in $E$ such that $p(G(\varphi)x)\leq q(x)$, for any $\varphi\in B$ and $x\in E$. \\
\indent Then it is said that $\mathcal{G}$ is a pre-(C-DS) (pre-(C-UDS) of $\ast$-class) iff the following holds:
\[\tag{C.S.1}
\mathcal{G}(\varphi*_0\psi)C=\mathcal{G}(\varphi)\mathcal{G}(\psi),\quad \varphi,\;\psi\in\mathcal{D} \ \ (\varphi,\;\psi\in\mathcal{D}^{\ast}).
\]
If, additionally,
\[\tag{C.S.2}
\mathcal{N}(\mathcal{G}):=\bigcap_{\varphi\in\mathcal{D}_0}N(\mathcal{G}(\varphi))=\{0\} \ \  \Biggl(\mathcal{N}(\mathcal{G}):=\bigcap_{\varphi\in\mathcal{D}^{\ast}_0}N(\mathcal{G}(\varphi))=\{0\}\Biggr),
\]
then $\mathcal{G}$ is called a $C$-distribution semigroup ($C$-ultradistribution semigroup of $\ast$-class), (C-DS) ((C-UDS)) in short.
A pre-(C-DS) $\mathcal{G}$ is called dense if
\[\tag{C.S.3}
\mathcal{R}(\mathcal{G}):=\bigcup\limits_{\varphi\in\mathcal{D}_0}R(\mathcal{G}(\varphi))
\
\Biggl(\mathcal{R}(\mathcal{G}):=\bigcup\limits_{\varphi\in\mathcal{D}^{\ast}_0}R(\mathcal{G}(\varphi))
E \Biggr)
\
\] is dense in $E$.
The notion of a dense pre-(C-UDS) $\mathcal{G}$ of $\ast$-class (and the set $\mathcal{R}(\mathcal{G})$) is defined similarly.
The notion of a (q-)exponential
$C$-distribution semigroup ($C$-ultradistribution semigroup of $\ast$-class) is introduced as follows.
\begin{defn}(Recall \cite{cdscuds})
Let ${\mathcal G}$ be a $C$-distribution semigroup ($C$-ultradistribution semigroup of $\ast$-class).
Then $\mathcal{G}$ is said to be an
exponential $C$-distribution semigroup ($C$-ultradistribution semigroup of $\ast$-class) if there exists $\omega\in\mathbb{R}$ such that $e^{-\omega t}\mathcal{G}
\in\mathcal{S}'(L(E))$ ($e^{-\omega t}\mathcal{G}
\in\mathcal{S}'^{\ast}(L(E))$).
We use the shorthand (C-EDS) ((C-EUDS)) to denote an exponential
$C$-distribution semigroup ($C$-ultradistribution semigroup of $\ast$-class).\end{defn}
Following (\cite{segedin}) we give the next definition:
\begin{defn}\label{1145} Then $\mathcal{G}$ is said to be a quasi-equicontinuous exponential\\ (short, (q-)exponential) $C$-distribution semigroup ($C$-ultradistribution semigroup of $\ast$-class) if for every $p\in\circledast$ and bounded subset $B$ in $E$ there exist $M_p\geq1$, ${\omega}_p\geq0$ and $q_p$ seminorm on ${\mathcal S}({\mathbb R})$ $({\mathcal S}^{\ast}({\mathbb R}))$ such that $\sup\limits_{x\in B}p({\mathcal G}({\varphi})x))\leq M_p e^{{\omega}_p\cdot}q_p(\varphi)$, for all $\varphi\in{\mathcal S}_0({\mathbb R})\,\, (\varphi\in{\mathcal S}^{\ast}_0({\mathbb R}))$.
We use the shorthand q-(C-EDS) (q-(C-EUDS)) to denote a $q$-exponential
$C$-distribution semigroup ($C$-ultradistribution semigroup of $\ast$-class).\end{defn}

It should be noted that we can similarly introduce the notion of a (q-)exponential pre-(C-DS) (pre-(C-UDS) of $\ast$-class). The corresponding classes will be denoted by
pre-(C-EDS),  pre-(C-EUDS), pre-q-(C-EDS) and pre-q-(C-EUDS).\\
\indent Let $\alpha\in(0,\infty)$, $\alpha$ is not an element from ${\mathbb N}$ and $f\in{\mathcal S}$. Put $n=\lceil\alpha\rceil=\inf\{k\in{\mathbb Z}\, :\, k\geq\alpha\}$. The Weyl fractionaal derivatives (\cite{mija}), $W_+^{\alpha}$ and $W_-^{\alpha}$ of order $\alpha$ are defined by:\\
$$W_+^{\alpha}f(t):=\frac{(-1)^n}{\Gamma(n-\alpha)}\frac{d^n}{dt^n}\int\limits_t^{\infty}(s-t)^{n-\alpha-1}f(s)\, ds,\quad\quad t\in{\mathbb R},$$
$$W_-^{\alpha}f(t):=\frac{(-1)^n}{\Gamma(n-\alpha)}\frac{d^n}{dt^n}\int\limits_{-\infty}^t(s-t)^{n-\alpha-1}f(s)\, ds,\quad\quad t\in{\mathbb R}.$$
If $\alpha=n$, put $W_+^n:=(-1)^n\frac{d^n}{dt^n}$ and $W_-^n:=\frac{d^n}{dt^n}$. Then $W_{\pm}^{\alpha+\beta}=W_{\pm}^{\alpha}W_{\pm}^{\beta}$, $\alpha,\beta>0$.

\begin{thm}\label{miana-exp}
Assume that $\alpha \geq 0$ and $A$ generates a (q-)exponentially equicontinuous $\alpha$-times integrated $C$-semigroup $(S_{\alpha}(t))_{t\geq 0}.$
Then $A$ is the integral generator of a (q-)exponential (C-DS) ${\mathcal G}_{\alpha}$ defined through $\mathcal{G}_{\alpha}(\varphi)x:=\int^{\infty}_0W^{\alpha}_+\varphi(t)S_{\alpha}(t)x\,dt,\quad x\in E,\ \varphi\in\mathcal{D}$.
\end{thm}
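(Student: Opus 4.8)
The plan is to verify the three defining properties of a (q-)exponential $(C\text{-DS})$ in turn, and then identify the integral generator. \emph{First} I would check that $\mathcal{G}_{\alpha}$ is a well-defined element of $\mathcal{D}'_0(L(E))$ satisfying $C\mathcal{G}_{\alpha}=\mathcal{G}_{\alpha}C$ and the bounded equicontinuity requirement: since $(S_{\alpha}(t))_{t\geq0}$ is (q-)exponentially equicontinuous, for each $p\in\circledast$ there are $M_p\geq1$, $\omega_p\geq0$ and $q_p\in\circledast$ with $p(S_{\alpha}(t)x)\leq M_p e^{\omega_p t}q_p(x)$; because $\varphi\in\mathcal{D}_0$ has support in $[0,\infty)$, the Weyl derivative $W^{\alpha}_+\varphi$ is smooth with compact support in $[0,\infty)$ (one checks $W^{\alpha}_+$ maps $\mathcal{D}_0$ into itself, or at least into the rapidly decreasing functions on $[0,\infty)$ when we later pass to $\mathcal{S}_0$), so the integral converges absolutely and defines a continuous operator, with the desired seminorm estimate $\sup_{x\in B}p(\mathcal{G}_{\alpha}(\varphi)x)\leq M_p\bigl(\sup_{x\in B}q_p(x)\bigr)\int_0^{\infty}e^{\omega_p t}|W^{\alpha}_+\varphi(t)|\,dt$; commutation with $C$ is inherited from $CS_{\alpha}(t)=S_{\alpha}(t)C$.

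\emph{Second} I would establish (C.S.1), the algebraic semigroup law $\mathcal{G}_{\alpha}(\varphi*_0\psi)C=\mathcal{G}_{\alpha}(\varphi)\mathcal{G}_{\alpha}(\psi)$. The natural route is the functional equation satisfied by an $\alpha$-times integrated $C$-semigroup, namely $S_{\alpha}(t)S_{\alpha}(s)x=\frac{1}{\Gamma(\alpha)}\bigl[\int_0^{t+s}-\int_0^t-\int_0^s\bigr](t+s-r)^{\alpha-1}S_{\alpha}(r)Cx\,dr$ (the standard convolution identity), combined with the semigroup property $W^{\alpha}_+\varphi * W^{\alpha}_+\psi$ relating to $W^{\alpha}_+(\varphi*_0\psi)$ through $W^{\alpha+\beta}_{\pm}=W^{\alpha}_{\pm}W^{\beta}_{\pm}$ and the fact that convolution with the kernel $t^{\alpha-1}/\Gamma(\alpha)$ is the fractional integral inverting $W^{\alpha}_+$. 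Plugging these in and using Fubini's theorem to rearrange the resulting triple integral should collapse it to $\mathcal{G}_{\alpha}(\varphi)\mathcal{G}_{\alpha}(\psi)$ after one extra factor of $C$ appears. \emph{Third}, (C.S.2) — triviality of $\mathcal{N}(\mathcal{G}_{\alpha})$ — follows because if $x\in N(\mathcal{G}_{\alpha}(\varphi))$ for all $\varphi\in\mathcal{D}_0$, then applying this to an approximate identity and using injectivity of $C$ together with the strong continuity and the local structure of $(S_{\alpha}(t))$ forces $Cx=0$, hence $x=0$.

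\emph{Fourth}, the (q-)exponential estimate of Definition~\ref{1145}: one must extend the representation $\mathcal{G}_{\alpha}(\varphi)x=\int_0^{\infty}W^{\alpha}_+\varphi(t)S_{\alpha}(t)x\,dt$ from $\varphi\in\mathcal{D}_0$ to $\varphi\in\mathcal{S}_0(\mathbb{R})$ and bound $\int_0^{\infty}e^{\omega_p t}|W^{\alpha}_+\varphi(t)|\,dt$ by a seminorm $q_p$ on $\mathcal{S}$; here one chooses $\omega_p$ slightly larger than the growth rate of $S_{\alpha}$, uses that $W^{\alpha}_+$ maps $\mathcal{S}$ continuously into functions decaying faster than any $e^{-\lambda t}$ on $[0,\infty)$ (so $\int_0^{\infty}e^{\omega_p t}|W^{\alpha}_+\varphi(t)|\,dt \leq C\, p_{m,n}(\varphi)$ for suitable $m,n$), and this yields the required form with the same $M_p,\omega_p$. \emph{Finally}, to identify $A$ as the integral generator: one computes, for $\varphi\in\mathcal{D}_0$ and $x\in E$, that $\mathcal{G}_{\alpha}(-\varphi')x - \mathcal{G}_{\alpha}(\varphi)Cx = \varphi(0)Cx$ using integration by parts against the defining equation $S_{\alpha}(t)x = \frac{t^{\alpha}}{\Gamma(\alpha+1)}Cx + A\int_0^t S_{\alpha}(s)x\,ds$ and $W^{\alpha}_+(-\varphi') = -(W^{\alpha}_+\varphi)'$ modulo boundary terms, which by the abstract characterization of the integral generator of a $(C\text{-DS})$ (as in \cite{cdscuds}) shows that the integral generator coincides with $A$.

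The main obstacle I expect is \emph{Second} step, the verification of (C.S.1): correctly handling the interplay between the Weyl fractional derivative $W^{\alpha}_+$ (which is nonlocal and only well-behaved on the half-line because of the support condition), the fractional-integral convolution kernel appearing in the $\alpha$-times integrated $C$-semigroup functional equation, and the truncated convolution $*_0$ — getting all the fractional-calculus identities and the Fubini rearrangement to align so that exactly one factor of $C$ survives on the left-hand side. The other delicate point is justifying the extension of the integral formula to the whole of $\mathcal{S}_0$ and the rapid decay of $W^{\alpha}_+\varphi$ against the exponential weight, but that is a matter of careful but routine estimates on $\mathcal{S}$, whereas the semigroup-law computation is where a genuine structural argument is needed.
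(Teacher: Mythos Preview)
Your overall plan is sound and considerably more detailed than the paper's own argument. The paper's proof is extremely brief: it sketches only the (q-)exponential estimate (your \emph{Fourth} step) in two lines, and for everything else --- well-definedness, (C.S.1), (C.S.2), and the identification of $A$ as integral generator --- it simply invokes \cite[Theorem~3.1.14]{knjigah} and \cite[Theorem~3.15]{cdscuds}. Your steps 1, 2, 3 and 5 are essentially a reconstruction of what those cited results contain, and your outline of them (in particular the verification of (C.S.1) via the fractional functional equation for $(S_{\alpha}(t))$ combined with $W^{\alpha+\beta}_{+}=W^{\alpha}_{+}W^{\beta}_{+}$) is the standard route and is correct in spirit. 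So where you expected the main obstacle (step~2) there is none beyond bookkeeping.

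There is, however, a genuine gap in your \emph{Fourth} step. You assert that ``$W^{\alpha}_{+}$ maps $\mathcal{S}$ continuously into functions decaying faster than any $e^{-\lambda t}$ on $[0,\infty)$'' and hence that $\int_{0}^{\infty}e^{\omega_{p}t}|W^{\alpha}_{+}\varphi(t)|\,dt\leq C\,p_{m,n}(\varphi)$. This is false: Schwartz functions need only decay faster than every polynomial, not faster than every exponential (take $\varphi(t)=e^{-\sqrt{1+t^{2}}}\in\mathcal{S}$, which decays only like $e^{-|t|}$), and the Weyl derivative does not improve this. Consequently the integral $\int_{0}^{\infty}e^{\omega_{p}t}|W^{\alpha}_{+}\varphi(t)|\,dt$ will in general diverge for $\varphi\in\mathcal{S}_{0}$, and the representation formula does not extend to $\mathcal{S}_{0}$ in the naive way you describe. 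The correct mechanism --- which is what the paper's terse estimate with the factor $\langle t\rangle^{-\beta}$ is encoding --- is that $e^{-\omega t}\mathcal{G}_{\alpha}$ itself lies in $\mathcal{S}'(L(E))$: distributionally $\mathcal{G}_{\alpha}$ is the $\alpha$-th derivative of $t\mapsto S_{\alpha}(t)$ (extended by zero for $t<0$), and since $e^{-\omega t}S_{\alpha}(t)$ is bounded for $\omega>\omega_{p}$, a Leibniz-type reduction expresses $e^{-\omega t}\mathcal{G}_{\alpha}$ as a finite combination of distributional derivatives of polynomially bounded continuous $L(E)$-valued functions, hence tempered. The weight you must beat is therefore polynomial, not exponential, and the bound comes out in terms of the Schwartz seminorms $p_{m,n}$ exactly as required.
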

\begin{proof}
We first show that ${\mathcal G}_{\alpha}$ is (q-)exponential semigroup. Using that $(S_{\alpha}(t))_{t\geq0}$ is (q-)exponential semigroup, we have that for all $p\in\circledast$, there exist $M_p\geq1$, $\omega_p\geq0$ and $q_p$ seminorm in ${\mathcal S}({\mathbb R})$ such that
$$p{\Big(}e^{-{\omega}_p\cdot}{\mathcal G}_{\alpha}({\varphi})x{\Big)}\leq C_1q_p({\varphi})\int_0^{\infty}\frac{dt}{{\langle x\rangle}^{\beta}}\leq Cq_p({\varphi})$$, where $\beta\geq0$ and for all $\varphi\in{\mathcal S}_0$. This one and the proof of \cite[Theorem 3.1.14]{knjigah} and \cite[Theorem 3.15]{cdscuds} give us the statement of the theorem.
\end{proof}

\begin{rem}\label{s-t}
\begin{itemize}
\item[(i)]
Suppose that $\mathcal{G}$ is a (q-)exponential $C$-distribution semigroup generated by $A,$ $\omega\in\mathbb{R}$ and $e^{-\omega t}\mathcal{G}
\in\mathcal{S}'(L(E)),$ if ${\mathcal G}$ is an (E-CDS), resp. for each $p\in \circledast$ there exists $\omega_{p}\in\mathbb{R}$ such that $e^{-\omega_{p} t}\mathcal{G}
\in\mathcal{S}'(L(E)),$ if ${\mathcal G}$ is a q-(E-CDS). Then the proof of implication (i) $\Rightarrow$ (ii) of \cite[Theorem 3.1.14]{knjigah}
shows that $e^{-\omega t}\mathcal{G}
\in\mathcal{S}'(L(E,[D(A)])),$ resp. $e^{-\omega_{p} t}\mathcal{G}
\in\mathcal{S}'(L(E,[D(A)])).$ Suppose, further, that there exists a non-negative integer $n$ and a continuous function $V : {\mathbb R} \rightarrow L(E,[D(A)])$ satisfying that
\begin{equation}\label{qju}
\bigl \langle e^{-\omega t}\mathcal{G},\varphi \bigr \rangle =(-1)^{n}\int_{-\infty}^{\infty}\varphi^{(n)}(t)V(t)\, dt,\quad \varphi \in {\mathcal D},
\end{equation}
if ${\mathcal G}$ is an (E-CDS), resp., satisfying that (\ref{qju}) holds with the number $\omega$ replaced by $\omega_{p},$
and that (in both cases) for each $p\in \circledast$ there exist $c_{p}>0,$ $r_{p}\geq 0$ and $q_{p}\in \circledast$ such that $p(V(t)x)\leq c_{p}t^{r_{p}}q_{p}(x),$ $t\geq 0,$ $x\in E.$ Then $A$ generates a (q-)exponentially equicontinuous
$n$-times integrated $C$-semigroup on $E.$
\item[(ii)] Let $a>0$, $b>0$, $\alpha>0$ and $E(a,b)\subseteq \rho_C(A).$ Suppose that the mapping $\lambda\mapsto(\lambda-A)^{-1}Cx$,
$\lambda\in E(a,b)$ is continuous for every fixed element $x\in E$, as well as that the operator family $\{(1+|\lambda|)^{-\alpha}(\lambda- A)^{-1}C : \lambda \in E(a,b)\}\subseteq L(E)$ is equicontinuous and there exists a number $a'\in (0,a)$ such that the operator family $\{(1+|\lambda|)^{-\alpha}(\lambda- A)^{-1}C : \Re \lambda >a'\}\subseteq L(E)$ is equicontinuous. Then the operator $C^{-1}AC$ generates a (C-DS) ${\mathcal G},$ (see \cite{cdscuds}), given by \begin{align}\label{franci}
\mathcal{G}(\varphi)x:=(-i)\int_{\Gamma}\hat{\varphi}(\lambda)(\lambda-A)^{-1}Cx\,d\lambda,
\;\;x\in E,\;\varphi\in\mathcal{D},
\end{align}
with $\Gamma$ being the upwards oriented boundary of region $E(a,b)$. The operator $C^{-1}AC$ generates an exponential (C-DS) ${\mathcal G},$ if $(a,\infty)\subset{\rho}_{C}(A)$ with $\Gamma$ being the line connecting the points $\bar{a}-i\infty$ and $\bar{a}+i\infty$.

\end{itemize}
\end{rem}

\begin{thm}\label{tempera-ultra}
\begin{itemize}
\item[(i)] Suppose that there exist $l>0,$ $\beta>0$ and $k>0,$ in the Beurling case,
resp., for every $l>0$ there exists $\beta_l>0$, in the Roumieu case, such that
$
\Omega^{(M_p)}_{l,\beta}:=\{\lambda\in\mathbb{C}:\Re\lambda\geq M(l|\lambda|)+\beta\}\subseteq\rho_{C}(A),
$ resp.
$
\Omega^{\{M_p\}}_{l,\beta_l}:=\{\lambda\in\mathbb{C}:\Re\lambda\geq M(l|\lambda|)+\beta_l\}\subseteq\rho_{C}(A),
$
the mapping $\lambda \rightarrow (\lambda-A)^{-1}Cx,$ $\lambda \in \Omega^{(M_p)}_{l,\beta},$ resp. $\lambda \in \Omega^{\{M_p\}}_{l,\beta_{l}},$ is continuous for every fixed element $x\in E,$
and the operator family
$
\{e^{-M(kl|\lambda|)}(\lambda-A)^{-1}C : \lambda\in\Omega^{(M_p)}_{l,\beta}\}\subseteq L(E),
$ resp. $
\{e^{-M(l|\lambda|)}(\lambda-A)^{-1}C : \lambda\in\Omega^{\{M_p\}}_{l,\beta_{l}}\}\subseteq L(E),
$
is equicontinuous. Denote by $\Gamma ,$ resp. $\Gamma_{l},$ the upwards oriented boundary of $\Omega^{(M_p)}_{l,\beta},$ resp. $\Omega^{\{M_p\}}_{l,\beta_{l}}.$ Define, for every $x\in E$ and $\varphi\in\mathcal{D}^{\ast},$ the element
$
{\mathcal G}(\varphi)x$ with
\begin{align} \mathcal{G}(\varphi)x:=(-i)\int_{\Gamma}\hat{\varphi}(\lambda)(\lambda-A)^{-1}Cx\,d\lambda,
\;\;x\in E,\;\varphi\in\mathcal{D},
\end{align} in the Beurling case; in the Roumieu case, for every number $k>0$ and for every function $\varphi \in {\mathcal D}^{\{M_{p}\}}_{[-k,k]},$
we define the element ${\mathcal G}(\varphi)x$ in the same way as above, with the contour $\Gamma$ replaced by $\Gamma_{l(k)}$.
Then ${\mathcal G}\in {\mathcal D}^{\prime \ast}_{0}(L(E))$ is boundedly equicontinuous,
${\mathcal G}(\varphi)C=C{\mathcal G}(\varphi),$ $\varphi \in {\mathcal D}^{\ast},$
$\mathcal{G}(\varphi)A\subseteq A{\mathcal G}(\varphi),$ $\varphi \in {\mathcal D}^{\ast}$ and $A\mathcal{G}(\varphi)x=\mathcal{G}\bigl(-\varphi'\bigr)x-\varphi(0)Cx,\quad x\in E,\ \varphi \in {\mathcal D} \ \ (\varphi \in {\mathcal D}^{\ast})$. In particular, ${\mathcal G}$ is a pre-(C-UDS) of $\ast$-class.
If ${\mathcal G}$ additionally satisfies \emph{(C.S.2)}, then ${\mathcal G}$ is a (C-UDS) of $\ast$-class generated by $C^{-1}AC.$
\item[(ii)]
Suppose that $A$ is a closed linear operator on $E$ satisfying that there exist $a\geq 0$ such that $
\{\lambda\in\mathbb{C}:\Re\lambda>a\}\subseteq\rho_{C}(A)$ and the mapping $\lambda \mapsto (\lambda-A)^{-1}Cx,$ $\Re \lambda>a$ is continuous for every fixed element $x\in E.$
Suppose that there exists a number $k>0,$ in the Beurling case, resp., for every number $k>0,$
in the Roumieu case,
such that the operator family
$\{e^{-M(k|\lambda|)}(\lambda-A)^{-1}C : \Re \lambda>a\}\subseteq L(E)$ is equicontinuous. Set
$$
{\mathcal G}(\varphi)x=(-i)\int\limits_{\bar{a}-i\infty}^{\bar{a}
+i\infty}\hat{\varphi}(\lambda)\bigl(\lambda-A\bigr)^{-1}Cx\,d\lambda,\quad x\in E,\ \varphi\in\mathcal{D}^{\ast}.
$$
Then ${\mathcal G}\in {\mathcal D}^{\prime \ast}_{0}(L(E))$ is boundedly equicontinuous, $e^{-\omega t}G\in\mathcal{S}^{\prime \ast}(L(E))$ for all $\omega >a,$
${\mathcal G}(\varphi)C=C{\mathcal G}(\varphi),$ $\varphi \in {\mathcal D}^{\ast},$
$\mathcal{G}(\varphi)A\subseteq A{\mathcal G}(\varphi),$ $\varphi \in {\mathcal D}^{\ast}$ and $A\mathcal{G}(\varphi)x=\mathcal{G}\bigl(-\varphi'\bigr)x-\varphi(0)Cx,\quad x\in E,\ \varphi \in {\mathcal D} \ \ (\varphi \in {\mathcal D}^{\ast})$. In particular, ${\mathcal G}$ is a pre-(C-EUDS) of $\ast$-class.
If ${\mathcal G}$ additionally satisfies \emph{(C.S.2)}, then ${\mathcal G}$ is a (C-EDS) of $\ast$-class generated by $C^{-1}AC.$
\end{itemize}
\end{thm}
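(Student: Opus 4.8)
The plan is to treat (i) and (ii) together, since the construction is the same in both cases and they differ only in the shape of the integration contour — the boundary $\Gamma$ (resp.\ $\Gamma_l$) of $\Omega^{(M_p)}_{l,\beta}$ (resp.\ $\Omega^{\{M_p\}}_{l,\beta_l}$) in (i), the vertical line $\Re\lambda=\bar a$ in (ii) — and in the extra temperedness assertion of (ii). The first step, and the delicate one, is to check that the contour integral defining $\mathcal{G}(\varphi)x$ converges. For $\varphi\in\mathcal{D}^{\ast}$ with $\operatorname{supp}\varphi\subseteq[-k,k]$, the Paley--Wiener theorem for compactly supported ultradifferentiable functions of $\ast$-class (see \cite{komatsu}) gives $|\hat\varphi(\lambda)|\leq C\,e^{k|\Im\lambda|}\,e^{-M(h|\lambda|)}$, for every $h>0$ in the Beurling case and for some $h>0$ in the Roumieu case. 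I would combine this with the assumed equicontinuity of the family $\{e^{-M(\cdot)}(\lambda-A)^{-1}C\}$ along the contour, which gives $p((\lambda-A)^{-1}Cx)\leq c_p\,e^{M(l|\lambda|)}\,q_p(x)$ with the appropriate parameter in place of $l$, and with the geometry of $\Omega^{(M_p)}_{l,\beta}$: along its boundary $\Re\lambda$ grows like $M(l|\lambda|)$ while $|\Im\lambda|$ stays comparable to $|\lambda|$. Balancing the $e^{-M(h|\lambda|)}$ decay of $\hat\varphi$ against the $e^{M(l|\lambda|)}$ growth of the $C$-resolvent (the factor $e^{k|\Im\lambda|}$ being absorbed by the geometry), one obtains absolute $p$-integrability of $\lambda\mapsto\hat\varphi(\lambda)(\lambda-A)^{-1}Cx$ for every $p\in\circledast$ and $x\in E$, so that $\mathcal{G}(\varphi)x\in E$ is well defined by sequential completeness. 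In the Roumieu case the parameter $l=l(k)$ must be chosen small enough — hence $\beta_l$ as in the hypothesis — for this balance to work, which is precisely the role of the clause ``for every $l>0$ there exists $\beta_l$''.

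Next I would record the regularity of $\mathcal{G}$. Linearity of $\varphi\mapsto\mathcal{G}(\varphi)x$ is immediate, continuity on each step of the inductive limit follows because the estimates above are uniform on bounded subsets of $\mathcal{D}^{\ast}_{[-k,k]}$, and those same estimates provide, for every $p\in\circledast$ and every bounded $B\subseteq\mathcal{D}^{\ast}$, a seminorm $q\in\circledast$ with $p(\mathcal{G}(\varphi)x)\leq q(x)$ for all $\varphi\in B$ and $x\in E$; that is, $\mathcal{G}$ is boundedly equicontinuous. To see that $\mathcal{G}$ is supported in $[0,\infty)$, for $\varphi$ with $\operatorname{supp}\varphi\subseteq(-\infty,0)$ I would translate the contour to the right and let the translation tend to $+\infty$; this is legitimate by Cauchy's theorem because $\lambda\mapsto(\lambda-A)^{-1}Cx$ and $\hat\varphi$ are analytic to the right of the contour and $\hat\varphi(\lambda)\to0$ as $\Re\lambda\to+\infty$, and it forces $\mathcal{G}(\varphi)=0$. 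Hence $\mathcal{G}\in\mathcal{D}'^{\ast}_0(L(E))$.

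The structural relations come next. The identities $\mathcal{G}(\varphi)C=C\mathcal{G}(\varphi)$ and $\mathcal{G}(\varphi)A\subseteq A\mathcal{G}(\varphi)$ follow by moving $C$, resp.\ the closed operator $A$, inside the convergent integral, using that $(\lambda-A)^{-1}C$ commutes with $C$ and leaves $D(A)$ invariant. For the basic relation $A\mathcal{G}(\varphi)x=\mathcal{G}(-\varphi')x-\varphi(0)Cx$ I would insert the elementary identity $A(\lambda-A)^{-1}C=\lambda(\lambda-A)^{-1}C-C$ into the integral, use the transformation rule linking $\widehat{\varphi'}$ with $\hat\varphi$, and evaluate the remaining term by the Fourier inversion identity $(-i)\int_\Gamma\hat\varphi(\lambda)\,d\lambda=\varphi(0)$ (the real line may be deformed to $\Gamma$ since $\hat\varphi$ is entire and integrable there). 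With these relations established, $\mathcal{G}$ satisfies exactly the hypotheses under which the structural arguments of \cite[Theorem 3.1.14]{knjigah} and \cite[Theorem 3.15]{cdscuds} apply, and I would invoke them to conclude that $\mathcal{G}$ is a pre-(C-UDS) of $\ast$-class — in particular that (C.S.1) holds — and that, once (C.S.2) is added, $\mathcal{G}$ is a (C-UDS) of $\ast$-class with integral generator $C^{-1}AC$. For part (ii) I would additionally check $e^{-\omega t}\mathcal{G}\in\mathcal{S}'^{\ast}(L(E))$ for $\omega>a$: since $\widehat{e^{-\omega\cdot}\varphi}(\lambda)=\hat\varphi(\lambda+\omega)$, the substitution $\mu=\lambda+\omega$ rewrites the defining integral over the line $\Re\mu=\bar a+\omega$ with the $C$-resolvent $(\mu-\omega-A)^{-1}C$, which is defined there and has only $e^{M(\cdot)}$-growth because $\Re(\mu-\omega)=\bar a>a$; on that line $\hat\varphi$ is the ultra-rapidly decreasing Fourier transform of $e^{-(\bar a+\omega)t}\varphi$, so the integral converges absolutely and depends continuously on $\varphi\in\mathcal{S}^{\ast}$, yielding the extension.

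The main obstacle is the convergence estimate of the first step: it forces one to control simultaneously three competing quantities — the Paley--Wiener decay of $\hat\varphi$, which is quantitatively different in the Beurling and Roumieu classes; the $e^{\pm M(\cdot)}$ growth of the $C$-resolvent along the contour; and the rates at which $\Re\lambda$ and $|\Im\lambda|$ increase along the boundary of $\Omega^{(M_p)}_{l,\beta}$ — and, in the Roumieu case, to calibrate the free parameter $l=l(k)$ to the support $[-k,k]$ of the test function so that these rates combine in one's favour. Once these bookkeeping inequalities are in place, uniformly enough to also deliver the bounded equicontinuity and (for (ii)) the tempered growth, the remaining steps are routine $C$-resolvent calculus and a direct appeal to the structural results already available in \cite{knjigah} and \cite{cdscuds}.
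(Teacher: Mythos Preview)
Your approach is essentially the paper's: Paley--Wiener estimates for $\hat\varphi$, the equicontinuity hypothesis on the $C$-resolvent, absolute convergence of the contour integral, then the fundamental-solution identity $A\mathcal{G}(\varphi)x=\mathcal{G}(-\varphi')x-\varphi(0)Cx$ (which the paper writes in convolution form as $(P\ast\mathcal{G})(\varphi)=\langle\delta,\varphi\rangle C$ with $P=\delta'\otimes I_E-\delta\otimes A$), followed by an appeal to the structural theorems in \cite{cdscuds}. One point to correct: with the paper's convention $\hat\varphi(\lambda)=\int e^{\lambda t}\varphi(t)\,dt$, the Paley--Wiener bound for $\operatorname{supp}\varphi\subseteq[-k,k]$ reads $|\hat\varphi(\lambda)|\leq C\,e^{k|\Re\lambda|}e^{-M(|\lambda|/h)}$, not $e^{k|\Im\lambda|}$; this matters, because along $\partial\Omega^{(M_p)}_{l,\beta}$ one has $\Re\lambda=M(l|\lambda|)+\beta$ while $|\Im\lambda|\sim|\lambda|$, so only the $e^{k\Re\lambda}$ factor can be absorbed by the $e^{-M(\cdot)}$ decay --- an $e^{k|\Im\lambda|}$ factor would blow up the integral.
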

\begin{proof}
Using the Paley-Wiener theorem for ultradistributions \cite[Theorem 9.1]{k91} we have that there exist $k$, $h$ and $m$ (each number $k>0$, we can find constants $m(k)>0$ and $h(k)>0$) such that $|\hat{\varphi}|\leq m(k) e^{-M(\frac{|\lambda|}{h(k)})+k|\Re\lambda|}$, $\lambda\in{\mathbb C}$, $\varphi\in{\mathcal D}_K^{\ast}$. By the Cauchy theorem we can deform the path of integration to the straight line connecting $\bar{a}-i\infty$ and $\bar{a}+i{\infty}$, where $\bar{a}\in(0,\beta)$. Now we will show that ${\mathcal G}$, defined with ${\mathcal G}({\varphi})x=(-i)\int\limits_{\Gamma}\hat{\varphi}({\lambda})(\lambda-A)^{-1}Cx\, d\lambda$, $x\in E$, $\varphi\in{\mathcal D}^{\ast}$, where $\hat{\varphi}(\lambda)=\int_{-\infty}^{+\infty}e^{\lambda t}\varphi(t)\, dt$, $\lambda\in{\mathbb C}$, $\varphi\in{\mathcal D}^{\ast}$, is an $C$-ultradistribution fundamental solution for $A$. Then the existence of such ${\mathcal G}$ and further results on ${\mathcal G}$ are consequences of Theorem 2.5, Theorem 2.8 and Theorem 3.6 from \cite{cdscuds}. We will give the proof for Beurling case. The consideration in Roumieu case is quite similar. By assumptions we have that for every $p\in\circledast$, there exist $C_p\geq1$ and $q_p\in\circledast$ such that $p((\lambda-A)^{-1}Cx)\leq C_pe^{\Re\lambda-\beta}q_p(x)$, for $\lambda\in\Gamma$. Let $\mbox{supp}\,\varphi(t)\subset K=[a,b]$. Then,
$$|\hat{\varphi}(\lambda)|={\Big|}\frac{(-1)^n}{{\lambda}^n}\int\limits_Ke^{\lambda t}{\varphi}^{(n)}(t)\, dt{\Big|}\leq\sup\limits_n\frac{\|{\varphi}^{(n)}\|}{M_nh^n}\cdot\frac{M_nh^n}{|\lambda|^n}\int\limits_Ke^{\Re(\lambda t)}\, dt=$$
$$=\|\varphi\|_{M_n, h, K}\cdot\frac{M_nh^n}{|\lambda|^n}\cdot\frac{e^{b\Re\lambda}-e^{a\Re\lambda}}{\Re\lambda},$$ for all $h>0$. Since $\inf\limits_n\frac{M_nh^n}{|\lambda|^n}=e^{-M(\frac{\lambda}{h})}$, we have $$|\hat{\varphi}(\lambda)|\leq C_1\|\varphi\|_{M_n,h,K}e^{b\Re\lambda-M(\frac{\lambda}{h})}.$$ Moreover,  $M(\frac{\lambda}{h})=\Re\lambda{\Big(}\frac{1}{\alpha h}-\beta{\Big)}$ on $\Gamma$, implies that for every $p\in\circledast$, $$p({\mathcal G}({\varphi})x)\leq\int\limits_{\Gamma}p((\lambda-A)^{-1}C)|\hat{\varphi}(\lambda)|\,|d\lambda|\leq$$
$$\leq\int\limits_{\Gamma}C_pC_1e^{(\Re\lambda-\beta)+b\Re\lambda-M(\frac{\lambda}{h})}\|\varphi\|_{M_n,h,K}\, |d\lambda|=\int\limits_{\Gamma}C_2e^{(1+b-\frac{1}{\alpha h})\Re\lambda}\|\varphi\|_{M_n,h,K}\, |d\lambda|.$$
Now, for any $\alpha$ and $b$ we can find $h$ such that $1+b-\frac{1}{\alpha h}<0$, hence ${\mathcal G}({\varphi})$ is well defined. Now we will show that ${\mathcal G}$ is a $C$-ultradistribution fundamental solution for $A$. We put $P={\delta}'\otimes I_E-\delta\otimes A$. Then,
$$(P\ast {\mathcal G})({\varphi})={\Big(}({\delta}'\otimes I_E-\delta\otimes A)\ast{\mathcal G}{\Big)}({\varphi})=({\mathcal G}'-A{\mathcal G})({\varphi})={\mathcal G}(-{\varphi}')-A{\mathcal G}({\varphi}).$$ This gives
$$(P\ast{\mathcal G})({\varphi})=(-i)\int\limits_{\Gamma}(\lambda-A)^{-1}C\int\limits_{-\infty}^{+\infty}e^{\lambda t}{\varphi}(t)\, dt\, d\lambda-A{\mathcal G}({\varphi})=i\int\limits_{\Gamma}\int\limits_Ke^{\lambda t}\varphi(t)C\, dt\, d\lambda.$$
We can deform $\Gamma$ into a straight line connecting $\bar{a}-i\infty$ and $\bar{a}+i\infty$, so $$(P\ast{\mathcal G})({\varphi})=i\int\limits_{\bar{a}-i\infty}^{\bar{a}+i\infty}\int\limits_Ke^{\lambda t}\varphi(t)C\, dt\, d\lambda=i\int\limits_{\bar{a}-i\infty}^{\bar{a}+\infty}\int\limits_K\frac{e^{\lambda t}}{{\lambda}^2}{\varphi}''(t)C\, dt\, d\lambda.$$
The last integral is an inverse Laplace transform of the identity,  $$\langle{\mathcal L}^{-1}(1),\varphi\rangle=\langle i\int\limits_{\bar{a}-i\infty}^{\bar{a}+i\infty}\frac{e^{\lambda t}}{{\lambda}^2}\, d\lambda, {\varphi}''\rangle=\langle\delta,\varphi\rangle.$$ Thus, we obtain that $(P\ast{\mathcal G})({\varphi})=\langle\delta,\varphi\rangle\, C$ and ${\mathcal G}$ is a $C$-ultradistribution fundamental solution for $A$.\\
The proof of ii) is similar, so we omit it.

\end{proof}

\begin{rem}\label{ultra-logarithmic}
\begin{itemize}
\item[(i)]
Suppose that there exists an integer $n\in {\mathbb N}$ such that the family $\{(1+|\lambda|)^{-n}(\lambda-A)^{-1}C : \lambda \in \Omega^{(M_p)}_{l,\beta}\}$ ($\{(1+|\lambda|)^{-n}(\lambda-A)^{-1}C : \lambda \in \Omega^{(M_p)}_{l,\beta_{l}}\}$) in the case of (i),
and the family
$\{(1+|\lambda|)^{-n}(\lambda-A)^{-1}C : \Re\lambda >a\},$ in the case of (ii), is equicontinuous. Then ${\mathcal G}$ satisfies (C.S.2), and consequently,  ${\mathcal G}$ is a (C-UDS) of $\ast$-class ((C-EUDS) of $\ast$-class, in the case of (ii)) generated by $C^{-1}AC.$ The proof of this fact can be given as in the Banach space case (\cite{ci1}, \cite{knjigah}).
\item[(ii)]  Following J. Chazarain, we define the $(M_p)$-ultralogarithmic region $\Lambda_{\alpha,\beta,l}$ of type $l$ as follows
$$
\Lambda_{\alpha,\beta,l}:=\{\lambda\in\mathbb{C}:\Re\lambda\geq\alpha M(l|\Im\lambda|)+\beta\} \ \ (\alpha>0,\ \beta>0,\ l\in {\mathbb R}).
$$
Then the first part of Theorem \ref{tempera-ultra} can be reformulated with the region $\Omega^{(M_p)}_{l,\beta}$ replaced by $
\Lambda_{\alpha,\beta,l}.$
\end{itemize}
\end{rem}


In the remaining part of this section, we are interested in the following problem:

\begin{itemize}
\item[(Q)]
Suppose that a closed linear operator $A$ satisfies that there exist constants $l\geq 1$, $\alpha>0,$ $\beta>0$ and $k>0$
such that $
\Lambda_{\alpha,\beta, l}\subseteq\rho (A)$ ($RHP_{\alpha}\equiv \{\lambda \in {\mathbb C} : \Re \lambda>\alpha\}\subseteq \rho (A)$)  and that for each seminorm $q\in \circledast$ there exists a number $c_{q}>0$ and a seminorm $r\in \circledast$ such that
\begin{equation}\label{but-surf}
q\Bigl( \bigl(\lambda-A\bigr)^{-1}x \Bigr)\leq c_{q}e^{M(kl|\lambda|)}r(x),\quad x\in E,\ \lambda \in \Lambda_{\alpha,\beta, l} \ \ \bigl(RHP_{\alpha} \bigr).
\end{equation}
Does
there exist an injective operator $C\in L(E)$
such that $A$ generates a global locally equicontinuous
(exponentially equicontinuous) $C$-regularized semigroup $(S(t))_{t\geq 0}$ on $E?$
\end{itemize}

Let $\bar{\alpha}>\alpha.$ Designate by $\Gamma_{l}$ ($\Gamma_{\bar{\alpha}}$) the upwards oriented boundary of the ultra-logarithmic region $\Lambda_{\alpha,\beta, l}$ (the right line connecting the points $\bar{\alpha}-i\infty$ and $\bar{\alpha}+i\infty$).
Put
\begin{equation}\label{polugrupa}
{\mathcal G}(\varphi)x:=(-i)\int_{\Gamma_{l}\ (\Gamma_{\bar{\alpha}})}\hat{\varphi}(\lambda)(\lambda-A)^{-1}x\, d\lambda,\quad x\in E,\quad \varphi \in {\mathcal D}^{(M_{p})}.
\end{equation}
Then, Theorem \ref{tempera-ultra} implies that ${\mathcal G}$ is a pre-(UDS) (pre-(EUDS)) of Beurling class.
We will answer the question (Q) in the affirmative provided that ${\mathcal G}$ is a (UDS) ((EUDS)) of Beurling class generated by $A$ (which is equivalent to say that ${\mathcal G}$ satisfies (C.S.2) with $C=I$), as well as that  $(M_{p})$ satisfies (M.1), (M.2) and (M.3). In the remaining part of this section, these conditions will be assumed to hold true.

The following entire function of exponential type zero (\cite{k91}) plays a crucial role in the proof of Theorem \ref{grace} below:
$\omega(z)=:\prod_{i=1}^{\infty}\bigl(1+\frac{iz}{m_p}\bigr)$, $z\in\mathbb{C}$. Then
the properties (P.1)-(P.5) stated in \cite[Subsection 3.6.2]{knjigah} hold.
We define the abstract Beurling space of $(M_p)$ class
associated to a closed linear operator $A$ as in \cite{ci1}.
Put $E^{(M_p)}(A):=$projlim$_{h\to+\infty}E^{(M_p)}_h(A)$, where
$$
E^{(M_p)}_h(A):=\Biggl\{x\in D_{\infty}(A):\|x\|^{(M_p)}_{h,q}=\sup_{p\in\mathbb{N}_0}\frac{h^{p}q\bigl(A^px\bigr)}{M_p}<\infty\mbox{ for all }h>0\mbox{ and } q\in \circledast\Biggr\}.
$$
Then, for each number $h>0$ the calibration $(\|\cdot\|^{(M_p)}_{h,q})_{q\in \circledast}$ induces a Hausdorff sequentially complete locally convex space on $E^{(M_p)}_h(A),$
$E^{(M_p)}_{h'}(A)\subseteq E^{(M_p)}_h(A)$
provided $0<h<h'<\infty ,$  and the spaces  $E^{(M_p)}_h(A)$  and  $E^{(M_p)}(A)$ are continuously  embedded in $E$.
As in the Banach space case (\cite{knjigah}), we have that
$E^{(M_p)}(A)=E^{(M_p)}(A-z)$, $z\in\mathbb{C}$ and that the part of $A$ in $E^{(M_p)}(A)$ is a linear continuous mapping from $E^{(M_p)}(A)$ into $E^{(M_p)}(A)$.
It can be simply verified that ${\mathcal R}({\mathcal G})\subseteq E^{(M_{p})}(A)$ (cf. (\ref{polugrupa})).

\begin{thm}\label{grace}
Suppose that the requirements of (Q) hold, as well as that ${\mathcal G},$ defined through \emph{(\ref{polugrupa})}, is a (UDS) ((EUDS)) of Beurling class generated by $A$ (i.e., that ${\mathcal G}$ satisfies \emph{(C.S.2)}), and that  $(M_{p})$ satisfies
\emph{(M.1)}, \emph{(M.2)} and \emph{(M.3)}.
\begin{itemize}
\item[(i)]
Then the abstract Cauchy problem $(ACP)$
has a unique solution $u(t)$ for all $x\in E^{(M_p)}(A)$;
furthermore, the mapping $t\mapsto u(t)\in E,$ $t\geq 0$ is infinitely differentiable. If $K$ is a compact subset of $[0,\infty)$ and $h>0$, then the solution $u(t)$ of $(ACP)$ satisfies
$$
\sup_{t\in K,p\in\mathbb{N}_0}\frac{h^p}{M_p}q\Biggl(\frac{d^p}{dt^p}u(t)\Biggr)<\infty,\quad q\in \circledast.
$$
\item[(ii)]
There exists an injective operator $C\in L(E)$
such that $A$ generates a global locally equicontinuous (exponentially equicontinuous) $C$-regularized semigroup $(S(t))_{t\geq 0}\subseteq L(E)$ such that the mapping $t\mapsto S(t)\in L(E),$ $t\geq 0$ is
infinitely differentiable and $E^{(M_{p})}(A)\subseteq C(D_{\infty}(A)).$
Furthermore, for every compact set $K\subseteq[0,\infty),$ for every $h>0$, and for every bounded subset $B$ of $E,$ one has
$$
\sup_{t\in K,x\in B,p\in\mathbb{N}_0}\frac{h^p}{M_p}q\Biggl(\frac{d^p}{dt^p}S(t)x\Biggr)<\infty ,\quad q\in \circledast.
$$
\end{itemize}
\end{thm}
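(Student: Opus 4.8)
The plan is to establish the two items by different routes: (i) by transferring the Cauchy problem to the abstract Beurling space $E^{(M_p)}(A)$, on which $A$ acts boundedly, and (ii) by constructing the regularizing operator $C$ explicitly from the entire function $\omega$ of exponential type zero, following the Banach space templates of \cite{ci1,knjigah}.

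\emph{Proof of (i).} By the discussion preceding the statement, the part $A_{1}$ of $A$ in $E^{(M_p)}(A)$ is a continuous linear operator of the sequentially complete space $E^{(M_p)}(A)$ into itself and, as in the Banach space case \cite{ci1,knjigah}, it is even bounded, so that $u(t):=\sum_{n\geq 0}(t^{n}/n!)A^{n}x$ converges in $E^{(M_p)}(A)$ for every $x\in E^{(M_p)}(A)$ and $t\geq 0$. Since $E^{(M_p)}(A)$ is continuously embedded in $E$, the map $t\mapsto u(t)$ is real-analytic into $E^{(M_p)}(A)$, hence infinitely differentiable into $E$, with $u(0)=x$ and $u^{(p)}(t)=A^{p}u(t)$, so that $u(\cdot)$ solves $(ACP)$. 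Uniqueness of the $E$-valued solution follows from the fact that ${\mathcal G}$ is a (UDS) of Beurling class generated by $A$ — equivalently, from \emph{(C.S.2)} together with \emph{(M.1)}, \emph{(M.2)}, \emph{(M.3)} — via the standard convolution argument for ultradistribution semigroups reproduced in \cite{cdscuds,knjigah}. Finally, for a compact $K\subseteq[0,\infty)$ the set $\{u(t):t\in K\}$ is compact, hence bounded, in $E^{(M_p)}(A)$, which by the definition of the seminorms $\|\cdot\|^{(M_p)}_{h,q}$ is precisely the claimed estimate $\sup_{t\in K,\,p\in\mathbb{N}_{0}}(h^{p}/M_{p})\,q\bigl(u^{(p)}(t)\bigr)<\infty$ for all $h>0$, $q\in\circledast$.

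\emph{Proof of (ii).} Here I would use $\omega$ and its properties (P.1)--(P.5). Combining these with the resolvent bound \eqref{but-surf}, one checks that along $\Gamma_{l}$ (resp.\ $\Gamma_{\bar{\alpha}}$) the factor $1/\omega(\lambda)$ decays faster than $e^{M(kl|\lambda|)}$ grows, by a margin one can prescribe; consequently
\[
Cx:=\frac{1}{2\pi i}\int_{\Gamma_{l}\ (\Gamma_{\bar{\alpha}})}\frac{1}{\omega(\lambda)}(\lambda-A)^{-1}x\,d\lambda,\qquad x\in E,
\]
converges absolutely in $E$ and defines $C\in L(E)$; its injectivity — equivalently, that of $\omega(A)$ — is obtained as in \cite{ci1,knjigah}, using \emph{(M.1)}, \emph{(M.2)}, \emph{(M.3)} and the location of the zeros of $\omega$. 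Setting
\[
S(t)x:=\frac{1}{2\pi i}\int_{\Gamma_{l}\ (\Gamma_{\bar{\alpha}})}e^{\lambda t}\,\frac{1}{\omega(\lambda)}(\lambda-A)^{-1}x\,d\lambda,\qquad x\in E,\ t\geq 0,
\]
one verifies, by the resolvent equation and Cauchy's theorem exactly as in the cited references, that $(S(t))_{t\geq 0}$ is a $C$-regularized semigroup whose integral generator is $A$, that it is $C^{\infty}$ in $t$ with $\frac{d^{p}}{dt^{p}}S(t)x=A^{p}S(t)x$, and that $E^{(M_p)}(A)\subseteq C(D_{\infty}(A))$ because $x=C\,\omega(A)x$ with $\omega(A)x\in D_{\infty}(A)$ whenever $x\in E^{(M_p)}(A)$. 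The Gevrey bound then drops out of the elementary identity $\sup_{p}(h^{p}\varrho^{p}/M_{p})=e^{M(h\varrho)}$: writing $A^{p}S(t)x$ as the contour integral of $\lambda^{p}e^{\lambda t}\omega(\lambda)^{-1}(\lambda-A)^{-1}x$ (up to lower-order terms) and invoking \eqref{but-surf}, the weight $h^{p}|\lambda|^{p}/M_{p}$ collapses to $e^{M(h|\lambda|)}$ uniformly in $p$, the product $e^{M(h|\lambda|)}e^{M(kl|\lambda|)}$ is absorbed into a single $e^{M(H|\lambda|)}$ by \emph{(M.2)}, this is dominated by $c/|\omega(\lambda)|$, and the surviving $e^{t\,\mathrm{Re}\,\lambda}$ is bounded by $e^{\bar{\alpha} t}$ on $\Gamma_{\bar{\alpha}}$, respectively controlled uniformly on any compact $t$-set on $\Gamma_{l}$.

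\emph{Where the difficulty lies.} In the exponentially equicontinuous case — a genuine right half-plane $RHP_{\alpha}\subseteq\rho(A)$ and the vertical contour $\Gamma_{\bar{\alpha}}$ — all of the above is essentially complete once the estimates are spelled out. The real obstacle is the locally equicontinuous case: the contour $\Gamma_{l}$ runs off to $\mathrm{Re}\,\lambda\to+\infty$, and a single $\omega$ (hence a single $C$) cannot absorb $e^{t\,\mathrm{Re}\,\lambda}$ on $\Gamma_{l}$ simultaneously for all $t\geq 0$; one must therefore build $S(\cdot)$ first on each compact interval with a $C$ adapted to that interval and then pass to a global, $t$-independent $C$ and a global semigroup, precisely as in \cite{knjigah}. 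Apart from this, the only care needed relative to the Banach space arguments of \cite{ci1,knjigah,cdscuds} is the systematic replacement of operator-norm bounds by $\circledast$-equicontinuity: every estimate is read with a fixed $q\in\circledast$ producing some $r\in\circledast$ through \eqref{but-surf}, and equicontinuous operator families take the place of norm-bounded ones; this is bookkeeping and does not alter the structure of the proof.
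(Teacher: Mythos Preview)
Your argument for (i) has a genuine gap. The claim that the part of $A$ in $E^{(M_p)}(A)$ is ``even bounded, so that $u(t):=\sum_{n\geq 0}(t^{n}/n!)A^{n}x$ converges'' is false for the sequences $(M_{p})$ under consideration. Continuity of $A$ on $E^{(M_p)}(A)$ only gives $\|A^{n}x\|^{(M_p)}_{h,q}\leq c\,M_{n}h^{-n}\|x\|^{(M_p)}_{hH,q}$ (via (M.2)), with the seminorm index shifting from $h$ to $hH$; in the ambient space $E$ one obtains $q(A^{n}x)\leq M_{n}h^{-n}\|x\|^{(M_p)}_{h,q}$. Hence the $n$-th term of your series is controlled by $(t/h)^{n}M_{n}/n!$, and for the standard Gevrey case $M_{n}=n!^{s}$ with $s>1$ this diverges for every $t>0$, no matter how large $h$ is taken. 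The whole point of the ultradistribution setting is precisely that $A$ fails to generate via the naive exponential; the Beurling space $E^{(M_p)}(A)$ supplies ultradifferentiable vectors, not analytic ones. The paper does not attempt a power series: it first constructs the $C_{\tau}$-regularized semigroup $S(t)=(2\pi i)^{-1}\int_{\Gamma_{l}}e^{\lambda t}\omega^{nn_{0}}(i\lambda)^{-1}R(\lambda:A)\,d\lambda$, proves $A\int_{0}^{t}S(s)x\,ds=S(t)x-C_{\tau}x$ directly from the contour integral, reads off injectivity of $C_{\tau}$ from (C.S.2), and then obtains the solution of $(ACP)$ as $u(t)=S(t)C_{\tau}^{-1}x$ for $x\in E^{(M_p)}(A)\subseteq C_{\tau}(D_{\infty}(A))$. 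Uniqueness is handled by the Ljubich-type result \cite[Lemma 2.1.33(ii)]{knjigah}.

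Your treatment of (ii) is closer in spirit to the paper, but two points need tightening. First, a single $\omega(\lambda)$ does not give ``a margin one can prescribe'': one has $|\omega(i\lambda)|\geq e^{M(\sigma|\lambda|)}$ only for a fixed $\sigma>0$, which need not dominate $e^{M(kl|\lambda|)}$ together with $e^{M(h|\lambda|)}$ and $e^{t\Re\lambda}$ on $\Gamma_{l}$. The paper takes a carefully chosen power $\omega^{nn_{0}}(i\lambda)$ (with $n,n_{0}$ depending on $l,H,\tau$) precisely so that the combined exponent stays negative; your ``one checks'' hides a nontrivial choice. Second, the uniform-in-$h$ Gevrey bound for $S(t)$ in (ii) is obtained in the paper by passing to a subordinate sequence $(N_{p})\prec(M_{p})$ still satisfying (M.1)--(M.3); this extra slack is what allows the supremum over \emph{all} $h>0$ rather than a single $h$. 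Your (M.2)-absorption step handles one fixed $h$, not the full Beurling scale. Your remarks on the gluing issue in the locally equicontinuous case are apt and match what the paper does.
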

\begin{proof}
\begin{itemize}
\item[(i)]
The uniqueness of the solution of $(ACP)$ follows by the use of the Ljubich uniqueness type theorem \cite[Lemma 2.1.33(ii)]{knjigah}. Let $$S(t)=\frac{1}{2\pi i}\int\limits_{{\Gamma}_l} e^{{\lambda}t}\frac{R({\lambda}:A)}{{\omega}^{nn_0}(i{\lambda})}\, d{\lambda}.$$ Moreover, we have $$|{\omega}(s)|=\prod\limits_{k=1}^{\infty}|1+\frac{is}{m_k}|\geq\sup\limits_{p\in{\mathbb N}}\prod\limits_{k=1}^p\frac{s}{m_k}=\sup\limits_{p\in{\mathbb N}}\frac{s^p}{M_p}\geq e^{M(s)}, s>0.$$
We put $S(0)=C_{\tau}\in L(E)$. Since ${\mathcal G}$ satisfies \emph{(C.S.2)}), $C_{\tau}$ is injective. We have $(n-1)n_0{\sigma}\geq(n-1)Hll_0>n-1>1$ and $(n-1)n_0{\sigma}l_0^{-1}\geq(H^k+1)Hl_0l{\sigma}^{-1}{\sigma}l_0^{-1}>H^kl,$ where $n_0=\lfloor Hl_0l{\sigma}^{-1}\rfloor+1$, $k=\max(\lceil{\tau}{\alpha}\rceil,\alpha)$, $n_0\sigma>1$. Thus, for every $p\in{\mathbb N}_0$ and $q\in\circledast$, $$q{\Big(}{\lambda}^p\frac{e^{\lambda t}R({\lambda}:A)}{{\omega}^{nn_0}(i{\lambda})}x{\Big)}\leq C|{\lambda}|^p\frac{1}{e^{M(|{\lambda}|(n_0{\sigma}-1))}|{\lambda}|^2}q(x)$$ where the constant $C$ is independent of $p$. By the Fubini theorem we have $S(s)C_{\tau}=C_{\tau}S(s)$. Furthermore, $S(s)A\subseteq AS(s)$. Since ${\rho}(A)\neq0$, we have $C_{\tau}^{-1}AC_{\tau}=A$. Now, we will prove that $(S(t))_{t\geq0}$ is a global locally equicontinuous $C_{\tau}$-regularized semigroup generated by $A$, i.e. we will prove that $A\int\limits_0^tS(s)x\, ds=S(t)x-C_{\tau}x$, $t\in[0,\infty)$. For this, we need to prove \begin{equation}\label{667} \int\limits_{{\Gamma}_l}\frac{e^{\lambda t}}{{\omega}^{nn_0}(i{\lambda})}\, d{\lambda}=0.\end{equation} Since $$|\frac{e^{{\lambda}t}}{{\omega}^{nn_0}(i{\lambda})}|\leq\frac{C}{R^2}c_0e^{t\beta}A^{\lceil t\alpha\rceil-1}\frac{{\omega}(H^{\lceil t\alpha\rceil-1}lR)|}{|{\omega}((n-1)n_0\sigma l_0^{-1}R)|},$$
we have $\int_{{\Gamma}_R}\frac{e^{\lambda}t}{{\omega}^{nn_0}(i{\lambda})}\, d{\lambda}\longrightarrow0$, when $R\longrightarrow\infty$. By the Cauchy theorem, we obtain (\ref{667}). Now,
$$A\int\limits_0^tS(s)x\, ds=\frac{1}{2\pi i}\int\limits_0^t\int\limits_{{\Gamma}_l}e^{{\lambda} s}\frac{AR({\lambda}:A)x}{{\omega}^{nn_0}(i{\lambda})}\, d{\lambda}\, ds=$$
$$=\frac{1}{2\pi i}\int\limits_0^t\int\limits_{{\Gamma}_l}e^{{\lambda}s}\frac{{\lambda}R({\lambda}:A)x}{{\omega}^{nn_0}(i{\lambda})}\, d{\lambda}\, ds-\frac{1}{2\pi i}\int\limits_0^t\int\limits_{{\Gamma}_l}e^{{\lambda}s}\frac{x}{{\omega}^{nn_0}(i{\lambda})}\, d{\lambda}\, ds=$$
$$=\frac{1}{2\pi i}\int\limits_{{\Gamma}_l}{\Big(}\int\limits_0^te^{{\lambda}s}\frac{\lambda R({\lambda}:A)x}{{\omega}^{nn_0}(i{\lambda})}\, ds{\Big)}\, d{\lambda}=\frac{1}{2\pi i}\int\limits_{{\Gamma}_l}(e^{\lambda t}-1)\frac{R({\lambda}:A)x}{{\omega}^{nn_0}(i{\lambda})}\, d{\lambda}=$$
$$=\frac{1}{2\pi i}\int\limits_{{\Gamma}_l}e^{\lambda t}\frac{R({\lambda}:A)x}{{\omega}^{nn_0}(i{\lambda})}\, d{\lambda}-\frac{1}{2\pi i}\int\limits_{{\Gamma}_l}\frac{R({\lambda}:A)}{{\omega}^{nn_0}(i{\lambda})}\, d{\lambda}=$$ $$=S(t)x-C_{\tau}x.$$ Thus, for every $p\in {\mathbb N}$, the integral $\frac{1}{2\pi i}\int\limits_{{\Gamma}_l}{\lambda}^pe^{{\lambda}t}\frac{R({\lambda}:A)}{{\omega}^{nn_0}(i{\lambda})}\, d{\lambda}$ is convergent and that $$\frac{d}{dt}S(t)=\frac{1}{2\pi i}\int\limits_{{\Gamma}_l}{\lambda}e^{{\lambda}t}\frac{R({\lambda}:A)}{{\omega}^{nn_0}(i{\lambda})}\, d{\lambda}.$$ By induction $$\frac{d^p}{dt^p}S(t)=\frac{1}{2\pi i}\int\limits_{{\Gamma}_l}{\lambda}^pe^{{\lambda}t}\frac{R({\lambda}:A)}{{\omega}^{nn_0}(i{\lambda})}\, d{\lambda},\,\,\,\, p\in {\mathbb N}.$$ Now, for every $q\in\circledast$

$$\sup\limits_{t\in K, p\in{\mathbb N}_0}\frac{h^p}{M_p}q{\Big(}\frac{d^p}{dt^p}S(t){\Big)}\leq\frac{1}{2\pi}\sup\limits_{t\in K, p\in{\mathbb N}_0}\frac{h^p}{M_p}\int\limits_{{\Gamma}_l}\frac{|{\lambda}|^p|e^{{\lambda}t}|q(R({\lambda}:A))}{|{\omega}^{nn_0}(i{\lambda})|}
\, |d{\lambda}|\leq$$ \begin{equation}\label{223}\leq C\int\limits_{{\Gamma}_l}\frac{e^{M(h|\lambda|)}}{e^{M(|\lambda|(n_0\sigma-1))}|\lambda|^2}\, |d{\lambda}|\leq C\int\limits_{{\Gamma}_l}\frac{|d\lambda|}{|\lambda|^2}<\infty.\end{equation}
Thus there exist a number $n_0\in{\mathbb N}_0$ and a strictly increasing sequence $(k_l)_l$ in ${\mathbb N}$ such that $n_0> Hl_0l{\sigma}^{-1}$ and for every $l\in{\mathbb N}$ the operator $A$ is the generator of  a differentiable $C_{\tau}$-regularized semigroup $(S(t))$ . This implies that $(ACP)$ has a unique solution for every $x\in C_{\tau}(D(A))$, given by $u(t)=C_{\tau}^{-1}S(t)x$. If $x\in E^{(M_p)}(A)$, then $u(t)=S(t)C_{\tau}^{-1}x$ and $u(t)\in{\mathcal C}([0,\infty),E)$. Therefore we obtain a solution of the $(ACP)$ for $x\in E^{(M_p)}(A)$. By (\ref{223}) and the form of the solutions $u(t)$ we can obtain that for all $q\in\circledast$, $$\sup_{t\in K,p\in\mathbb{N}_0}\frac{h^p}{M_p}q\Biggl(\frac{d^p}{dt^p}u(t)\Biggr)<\infty.$$

\item[(ii)] By the use of the first part of the proof of (i), with slight modification and the fact that for any $(M_p)$ there exists $(N_p)$ satisfying $N_0=1$, $(M.1)$, $(M.2)$, $(M.3)$ and $N_p\prec M_p$ we prove the assertion.

\end{itemize}
\end{proof}
\begin{rem}\label{invarijantnost}
\begin{itemize}
\item[(i)]
Suppose that (\ref{but-surf}) holds with $q=r.$ Then there exists an injective operator $C\in L_{\circledast}(E)$
such that $A$ generates a global locally equicontinuous (exponentially equicontinuous) $C$-regularized semigroup $(S(t))_{t\geq 0}\subseteq L_{\circledast}(E)$ satisfying the properties stated in Theorem \ref{grace}(ii).
\item[ii)]
Observe that the previous analysis, with insignificant modifications, can be made for the corresponding ultra-logarithmic region $
\Lambda_{\alpha,\beta, l}$ belongs to the $C'$-resolvent set of $A,$ for some injective operator $C'\in L(E)$ commuting with $A$.
\end{itemize}
\end{rem}

\section{Examples and applications}

In this section, we will provide several illustrative examples of $C$-distribution semigroups and $C$-ultradistribution semigroups in Fr\' echet spaces. We pay special attention to the generation of ultradistibution semigroups of Beurling class.

\begin{example}\label{distr-jedan}
The concrete examples of differential operators generating\\ (q-)exponentially equicontinuous (locally equicontinuous) fractionally integrated $C$-semigroups in Fr\' echet spaces can be found in
\cite{moore}, \cite{x263} and \cite{segedin}. In view of Theorem \ref{miana-exp}, these operators can be used for construction of (q-)exponential $C$-distribution semigroups.
\end{example}

\begin{example}\label{multiplication}
(\cite[Example 4.4(c)]{a22}; cf. also \cite[Example 3.2(ii)]{segedin} and Example \ref{prusinjo} below)
Put $E:=\{f\in C^{\infty}([0,\infty)) : \lim
_{x\rightarrow +\infty}f^{(k)}(x)=0\mbox{ for all }k\in {{\mathbb
N}_{0}}\}$ and $||f||_{k}:=\sum_{j=0}^{k}\sup_{x\geq
0}|f^{(j)}(x)|,$ $f\in E,$ $k\in {{\mathbb N}_{0}}.$ Then the
topology induced by these norms turns $E$ into a Fr\' echet space. Consider the densely defined operator
$A : D(A) \rightarrow E,$ where $D(A):=\{f(x)\in E : (x+ie^{x})f(x)\in E\}$ and $Af(x):=(x+ie^{x})f(x),$ $x\geq 0,$ $f\in E.$
In the case that $X=L^{p}(1,\infty)$ for some $p\in [1,\infty],$ W. Arendt, O. El-Mennaoui and V. Keyantuo have proved in \cite{a22} that the operator $A$ generates a once integrated semigroup
$(S_{1}(t))_{t\in [0,1]}$ on $X,$ given by $(S_{1}(t)f)(x):=(x+ie^{x})^{-1}(e^{t(x+ie^{x})}-1),$ $x>1,$ $t\in [0,1].$ The main purpose of this example is to show that a completely different result holds if we choose $E$ as the state space. Speaking-matter-of-factly,
we cannot find numbers $n\in {\mathbb N}$ and $\tau>0$ such that $A$ generates a local $n$-times integrated semigroup $(S_{n}(t))_{t\in [0,\tau)}$ on $E.$ Suppose to the contrary that $(S_{n}(t))_{t\in [0,\tau)}$ is
a local $n$-times integrated semigroup generated by $A.$ Then it can be simply proved that $(S_{n}(t))_{t\in [0,\tau)}$ must be given by the following formula
$$
(S_{n}(t)f)(x):=\Biggl[ \frac{e^{t(x+ie^{x})}}{(x+ie^{x})^{n}}-\frac{t^{n-1}}{(n-1)!}\frac{1}{x+ie^{x}}-\cdot \cdot \cdot -\frac{t}{(x+ie^{x})^{n-1}}-\frac{1}{(x+ie^{x})^{n}}
\Biggr]f(x),
$$
for any $f\in E,$ $x\geq 0$ and $t\in [0,\tau).$
This immediately implies that for each $t>0$ there exists $f_{t}\in E$ such that $\| S_{n}(t)f_{t} \|_{n}=+\infty ;$ hence, there is no $t>0$ such that $S_{n}(t) \in L(E).$ A contradiction.  On the other hand,
we have that any complex number $\lambda \in {\mathbb C} \setminus S,$ where $S:=\{x+ie^{x} : x\geq 0\},$ belongs to the resolvent set of $A$ and
$$
\bigl(\lambda-A\bigr)^{-1}f(x)=\frac{f(x)}{\lambda- \bigl(x+ie^{x}\bigr)},\quad \lambda \in {\mathbb C} \setminus S,\ x\geq 0.
$$
Let $s>1,$ and let $a>0,$ $b>1$ satisfy that $x -\ln (((x-b)/a)^{s}+1)\geq 1,$ $x\geq b.$ Set $\Omega :=\{\lambda \in {\mathbb C} : \Re \lambda \geq a|\Im \lambda|^{1/s}+b\}$ and denote by $\Gamma$ the upwards oriented boundary of the region $\Omega .$
Inductively, we can prove that for each number $n\in {\mathbb N}$ there exist complex polynomials $P_{j}(z)=\sum^{j}_{l=0}a_{}z^{l}$ ($1\leq j \leq n$) such that dg$(P_{j})=j,$
$|a_{j,l}|\leq (n+1)!$ ($1\leq j \leq n,$ $0\leq l\leq j$) and
\begin{equation}\label{armatura}
\frac{d^{n}}{dx^{n}}\Bigl(\lambda-\bigl(x+ie^{x}\bigr) \Bigr)^{-1}=\sum \limits^{n+1}_{j=1}\Bigl(\lambda-\bigl(x+ie^{x}\bigr) \Bigr)^{-j-1}P_{j}\bigl(e^{x}\bigr),\quad x\geq 0,\ \lambda \in {\mathbb C}\setminus S.
\end{equation}
Suppose $\lambda \in \Omega$ and $x\geq 0.$ If $|\Im \lambda -e^{x}|\geq 1,$ then we have the following estimate
\begin{align}
\notag &\frac{e^{2jx}}{\bigl(\Re \lambda -x\bigr)^{2k}+\bigl( \Im \lambda-e^{x}\bigr)^{2k}}\leq \frac{e^{2jx}}{\bigl( \Im \lambda-e^{x}\bigr)^{2k}}
\\\label{armatura1} & \leq 2^{2j}\bigl( 1+|\Im \lambda|\bigr)^{2j},\quad k\in {\mathbb N}_{0},\ 0\leq j< k.
\end{align}
If $|\Im \lambda -e^{x}|<1,$ then $\Im \lambda >0,$ $0\leq x<\ln (\Im \lambda +1),$ and
\begin{align}
\notag & \frac{e^{2jx}}{\bigl(\Re \lambda -x\bigr)^{2k}+\bigl( \Im \lambda-e^{x}\bigr)^{2k}}\leq \frac{e^{2jx}}{\bigl(\Re \lambda -x\bigr)^{2k}}
\\\label{armatura2} & \leq  \frac{\bigl(\Im \lambda +1\bigr)^{j}}{\Re \lambda -\ln \bigl(((\Re \lambda-b)/a)^{s}+1\bigr)}\leq \bigl(\Im \lambda +1\bigr)^{j},\quad k\in {\mathbb N}_{0},\ 0\leq j< k.
\end{align}
Combining (\ref{armatura})-(\ref{armatura2}), it can be simply proved that the operator family\\ $\{ e^{-d|\lambda|^{1/s}}(\lambda - A)^{-1} : \lambda \in \Omega\}\subseteq L(E)$ is equicontinuous for each number $d>0.$ Now it is not difficult to prove with the help of
Theorem \ref{tempera-ultra}(i) that $A$ generates an ultradistribution semigroup ${\mathcal G}$ of $(p!^{s})$-class, given by $({\mathcal G}(\varphi)f)(x)=(2\pi)^{-1}\hat{\varphi}(x+ie^{x})f(x),$ $f\in E,$ $x\geq 0,$ $\varphi \in {\mathcal D}^{(p!^{s})}$ (on the basis of above results, we can construct corresponding examples of (local) $K$-convoluted semigroups generated by $A$). Observe, finally, that this semigroup is not differentiable and that the same result holds if we choose ${\mathcal D}_{L^{p}(0,\infty)}$ as the state space.
\end{example}

\begin{example}\label{kunst-irish}
Let $h>0.$ Consider the  Fr\'echet space
\begin{align*}
E_{(h)}:=\Biggl\{f\in C^{\infty}& ([0,\infty))\, :\,  f(0)=0,
\\ & p_n(f):=\sup\limits_{k\in{\mathbb N}_0}\sup\limits_{t\geq k}\frac{h^n|{f}^{(n)}(t)|}{M_n}<+\infty \mbox{ for all }n\in {{\mathbb N}_{0}} \Biggr\}
\end{align*}
and a closed linear operator $A:=-d/ds$ with maximal domain in $E_{(h)}.$ Then it can be simply shown that any $\lambda \in {\mathbb C}$ with $\Re \lambda >0$ belongs to $\rho(A),$ with $(\lambda-A)^{-1}f(x)=\int^{x}_{0}e^{\lambda (s-x)}f(s)\, ds,$
$x\geq 0,$ $f\in E_{(h)}.$ Carrying out a very simple computation, we get that for each $n\in {\mathbb N}_{0}$ there exists a constant $c_{n}>0$ such that
$p_{n}((\lambda-A)^{-1}f)\leq c_{n}e^{M(h(1+|\lambda|))}(p_{0}(f)+\cdot \cdot \cdot +p_{n}(f)),$ $f\in E_{(h)},$ $\Re \lambda >0.$
By Theorem \ref{tempera-ultra}(ii), it readily follows that the operator $A$ generates a pre-(EUDS) ${\mathcal G}$ of Beurling class.
We can directly verify that ${\mathcal G}$ satisfies the condition (C.S.2) so that  ${\mathcal G}$ is, actually, an (EUDS) of Beurling class.
On the other hand, E. Borel's theorem for ultradifferentiable functions of Beurling class (see e.g. \cite{ptica}) shows that the operator $A$ is not stationary dense. By a statement in \cite{statio}, there is no $n\in {\mathbb N}$ such that the operator $A$ generates a local $n$-times integrated semigroup on $E_{(h)}.$
\end{example}

In the next example, we use the notion and terminology from \cite[Chapter 8]{a43}.

\begin{example}\label{skulls}
Suppose that $p\in[1,\infty)$, $m>0$, $\rho\in[0,1]$, $r>0$, $a\in S_{\rho,0}^m$ satisfies $(H_r),$
$E=L^p(\mathbb{R}^n)$ or $E=C_0(\mathbb{R}^n)$
(in the last case, we assume $p=\infty$),  $0\leq l\leq n,$ $A:=\mathrm{Op}_E(a)$
and
\begin{equation}\label{mh}
n\Bigl|\frac{1}{2}-\frac{1}{p}\Bigr|\Bigl(\frac{m-r-\rho+1}{r}\Bigr)<1.
\end{equation}
Let us recall that if $a(\cdot)$ is an elliptic polynomial of order $m$, then \eqref{mh} holds with $m=r$ and $\rho=1.$
Suppose, further, that there exists a sequence $(M_p)$ satisfying (M.1), (M.2) and (M.3$'$), as well as that $a(\mathbb{R}^n)\cap\Lambda_{l,\zeta,\eta}=\emptyset$ for some constants
$l\geq 1$, $\zeta>0$ and $\eta\in\mathbb{R}.$
In \cite[Example 3.5.30(ii)]{knjigah}, we have proved
that there exist numbers $\eta'\geq\eta ,$ $N\in {\mathbb N}$ and $M\geq 1$ such that
\begin{equation}\label{cale-jj}
\bigl \|R(\lambda:A)f\bigr\|= \Biggl \| {\mathcal F}^{-1} \Biggl( \frac{1}{\lambda -a(\xi)}{\mathcal Ff}(\xi) \Biggr) \Biggr \|\leq M\bigl(1+|\lambda|\bigr)^{N},\quad \lambda \in \Lambda_{l,\zeta,\eta'},\ f\in E.
\end{equation}
Set ${{\mathbb
N_{0}^{l}}}:=\{\eta \in {\mathbb N_{0}^{n}} : \eta_{l+1}=\cdot \cdot
\cdot =\eta_{n}=0\}$ and
$ E_{l}:=\{ f\in E : f^{(\eta )} \in E \mbox{ for all }\eta \in
{{\mathbb N_{0}^{l}}}\}.$ Then the totality of seminorms
$(q_{\eta}(f):=||f^{(\eta )}||_{E},\ f\in E_{l};\ \eta \in {{\mathbb
N_{0}^{l}}})$ induces a Fr\' echet topology on $E_{l}$ (\cite{x263}).
Define now the operator $A_{l}$ on $E_{l}$ by $D(A_{l}):=\{f\in E_{l} : \mathrm{Op}_E(a)f\in E_{l}\}$ and $A_{l}f:=\mathrm{Op}_E(a)f$
($f\in D(A_{l})$). Since
$$
{\mathcal F}^{-1} \Biggl( \frac{1}{\lambda -a(\xi)}\Bigl({\mathcal F}f^{(\eta)}\Bigr)(\xi) \Biggr) =\Biggr({\mathcal F}^{-1} \Biggl( \frac{1}{\lambda -a(\xi)}{\mathcal Ff}(\xi) \Biggr) \Biggr)^{(\eta)},\quad \eta \in {{\mathbb
N_{0}^{l}}},\ f\in E_{l},
$$
it readily follows from (\ref{cale-jj}) that $\Lambda_{l,\zeta,\eta'} \subseteq \rho(A_{l})$ and that for each $\eta \in {{\mathbb
N_{0}^{l}}}$ we have
$$
q_{\eta}\Bigl(R\bigl(\lambda:A_{l}\bigr)f\Bigr) \leq  M\bigl(1+|\lambda|\bigr)^{N}q_{\eta}(f),\quad \lambda \in \Lambda_{l,\zeta,\eta'},\ f\in E_{l}.
$$
Hence, Theorem \ref{tempera-ultra}(ii) and Remark \ref{ultra-logarithmic}(i) taken together imply that $A_{l}$ generates an ultradistribution semigroup of $(M_{p})$-class in $E_{l}.$
\end{example}

We close the paper with the following example.

\begin{example}\label{prusinjo}
Consider the Fr\' echet space $E$ from Example \ref{multiplication}, topologized with the family of seminorms $||f||_{k}=\sum_{j=0}^{k}\sup_{x\geq
0}|f^{(j)}(x)|,$ $f\in E,$ $k\in {{\mathbb N}_{0}}.$
Suppose $c_{0}>0,$ $\beta>0,$ $s>1,$ $l>0,$ $\bar{a}>0$ and define $A$ by $D(A):=\{u\in E : c_{0}u^{\prime}(0)=\beta u(0)\}$ and
$Au:=c_{0}u^{\prime \prime},$ $u\in D(A)$ (\cite{segedin}). Then $A$ cannot be the
generator of a $C_{0}$-semigroup since $D(A)$
is not dense in $E$ (cf. \cite[Definition 1.1, Proposition 1.3]{komura}).
Put $A_{1}:=A/c_{0},$ $M_{p}:=p!^{s},$
$\omega_{l,s}(\lambda):=\prod
_{p=1}^{\infty}(1+\frac{l\lambda}{p^{s}}),$ $\lambda \in {\mathbb
C}$ and $k_{l,s}(t):={\mathcal
L}^{-1}(\frac{1}{\omega_{l,s}(\lambda)})(t),\ t\geq 0.$
If $\lambda \in {\mathbb C} \setminus (-\infty,0],$ then we know that
$\lambda \in \rho (A)$ and
\begin{align*}
& \bigl( \lambda -A \bigr)^{-1}f(x)= \frac{1}{2{\sqrt{c_{0}\lambda}}}\Biggl[ \int ^{x}_{0}e^{-\sqrt{\lambda/c_{0}}(x-s)}f(s)\, ds+\int
^{\infty}_{x}e^{\sqrt{\lambda/c_{0}}(x-s)}f(s)\, ds\Biggr]
\\ &+\frac{1}{c_{0}}\Biggl[
\frac{c_{0}\sqrt{\lambda/c_{0}}-\beta}{c_{0}\sqrt{\lambda/c_{0}}+\beta}\frac{1}{2\sqrt{\lambda/c_{0}}}\int^{\infty}_{0}e^{-\sqrt{\lambda/c_{0}}
s}f(s)\, ds\Biggr]e^{-\sqrt{\lambda/c_{0}} x},\ x\geq 0,\ f\in E.
\end{align*}
Furthermore, for every sufficiently small number $\varepsilon>0$ and for every integer $k\in {\mathbb N}_{0},$ there exist constants $c(\varepsilon,l,s)>0$ and $c(k,\varepsilon,l,s)>0$ such that
\begin{equation}\label{faithdj}
\Bigl \|  (\lambda-A)^{-1}f \Bigr \|_{k}\leq c(k,\varepsilon,l,s)e^{c(\varepsilon,l,s)|\lambda|^{1/s}}\bigl \|f \bigr \|_{k},\quad f\in E,\ \lambda \in \Sigma_{\pi -\varepsilon}.
\end{equation}
Let the numbers $r>0$ and $\theta \in (-\pi/2,\pi/2)$ be fixed. Put
$$
{\mathcal G}_{r,\theta}(\varphi)f:=(-i)\int\limits_{\bar{a}-i\infty}^{\bar{a}
+i\infty}\hat{\varphi}(\lambda)\bigl(\lambda-re^{i\theta}A\bigr)^{-1}f\,d\lambda,\quad f\in E,\ \varphi\in {\mathcal D}^{(M_{p})}.
$$
Then Theorem \ref{tempera-ultra} implies
that ${\mathcal G}_{r,\theta}$ is a pre-(EUDS) of Beurling class, as well as that
$\mathcal{G}_{r,\theta}(\varphi)A\subseteq A{\mathcal G}_{r,\theta}(\varphi),$ $\varphi \in {\mathcal D}^{(M_{p})}$ and that $A\mathcal{G}(\varphi)x=\mathcal{G}\bigl(-\varphi'\bigr)x-\varphi(0)Cx,\quad x\in E,\ \varphi \in {\mathcal D} \ \ (\varphi \in {\mathcal D}^{\ast})$ with $A$ and ${\mathcal G}$ replaced respectively by $re^{i\theta}A$ and ${\mathcal G}_{r,\theta}.$
By \cite[Lemma 1.6.7]{a43}, we have that
\begin{equation}\label{lapla}
{\mathcal L}\Bigl(\bigl(t\pi \bigr)^{(-1)/2}e^{-s^{2}/4t}\Bigr)(\lambda)=e^{-\sqrt{\lambda} s}/\sqrt{\lambda},\quad \Re \lambda>0,\ s>0.
\end{equation}
Keeping in mind this equality, as well as the Fubini theorem and the obvious equality
$$
(-i)\int\limits_{\bar{a}-i\infty}^{\bar{a}
+i\infty}\hat{\varphi}(\lambda)e^{-\lambda t}\, d\lambda=\varphi (t),\quad \varphi\in {\mathcal D}^{(M_{p})},\ t\geq 0,
$$
we get that
$$
(-i)\int\limits_{\bar{a}-i\infty}^{\bar{a}
+i\infty}\hat{\varphi}(\lambda)\frac{e^{-\sqrt{\lambda}s}}{\sqrt{\lambda}}\,d\lambda=\int^{\infty}_{0}\bigl(t\pi \bigr)^{(-1)/2}e^{-s^{2}/4t}\varphi(t) \, dt,\quad
s>0,\
\varphi\in {\mathcal D}^{(M_{p})}.
$$
Using the principle of analytic continuation, it readily follows that
\begin{equation}\label{dropkick}
(-i)\int\limits_{\bar{a}-i\infty}^{\bar{a}
+i\infty}\hat{\varphi}(\lambda)\frac{e^{-\sqrt{\lambda}z}}{\sqrt{\lambda}}\,d\lambda=\int^{\infty}_{0}\bigl(t\pi \bigr)^{(-1)/2}e^{-z^{2}/4t}\varphi(t) \, dt,\quad
z\in \Sigma_{\pi/4},\
\varphi\in {\mathcal D}^{(M_{p})}.
\end{equation}
Taking into account (\ref{dropkick}), the Fubbini theorem, as well as
(\ref{lapla}), it can be simply proved that
\begin{align}
\notag {\mathcal G}_{r,\theta}& (\varphi)f(x)=
\frac{1}{2\sqrt{\pi c_{0}re^{i\theta}}}\int^{\infty}_{0}\varphi(t)\Biggl[ t^{(-1)/2}\int \limits^{x}_{0}e^{-s^{2}e^{i\theta}/4c_{0}tr}f(x-s)\, ds \Biggr]\, dt
\\\notag & +\frac{1}{2\sqrt{\pi c_{0}re^{i\theta}}}\int^{\infty}_{0}\varphi(t)\Biggl[ t^{(-1)/2}\int \limits^{\infty}_{0}e^{-s^{2}e^{i\theta}/4c_{0}tr}f(x+s)\, ds \Biggr]\, dt
\\\label{buda-bar} & +
\frac{1}{2\sqrt{c_{0}re^{i\theta}}} \int^{\infty}_{0}\varphi(t) \Biggl[ \frac{1}{2\pi i}\int\limits_{\bar{a}-i\infty}^{\bar{a}
+i\infty} e^{\lambda t}f(\lambda;x)\, d\lambda \Biggr]\, dt,\ x\geq 0,\ f\in E,\ \varphi\in {\mathcal D}^{(M_{p})}_{0},
\end{align}
where
$$
f(\lambda;x):=\Biggl[ e^{-\sqrt{\lambda r^{-1}e^{-i\theta} c_{0}^{-1}}x}
\frac{c_{0}\sqrt{\lambda  r^{-1}e^{-i\theta} c_{0}^{-1}}-\beta}{c_{0}\sqrt{\lambda  r^{-1}e^{-i\theta} c_{0}^{-1}}+\beta}\frac{1}{\sqrt{\lambda}}\int^{\infty}_{0}e^{-\sqrt{\lambda  r^{-1}e^{-i\theta} c_{0}^{-1}} v}f(v)\, dv \Biggr]
$$
for every $x> 0$ and for every $\lambda \in {\mathbb C}$ with $\Re \lambda =\bar{a}.$ Suppose now that ${\mathcal G}(\varphi)f=0,$ $\varphi \in {\mathcal D}^{(M_{p})}_{0}$ for some $f\in E.$ Then the representation formula (\ref{buda-bar}) implies
\begin{align}
\notag \bigl(t\pi)^{(-1)/2}\int \limits^{x}_{0}e^{-s^{2}e^{i\theta}/4c_{0}tr}f(x-s)\, ds &+ \bigl(t\pi)^{(-1)/2}\int \limits^{\infty}_{0}e^{-s^{2}e^{i\theta}/4c_{0}tr}f(x+s)\, ds
\\\label{murphy}& +\Biggl[ \frac{1}{2\pi i}\int\limits_{\bar{a}-i\infty}^{\bar{a}
+i\infty} e^{\lambda t}f(\lambda;x)\, d\lambda \Biggr]\, dt=0,
\end{align}
for all $x> 0$ and $t>0.$ Define $f_{1}(x):=f(x),$ $x\geq 0$ and $f_{1}(x):=0,$ $x<0.$ Taking the Laplace transform of the both sides of (\ref{murphy}), we obtain with the help of (\ref{lapla}), the  Fubini theorem, as well as the complex inversion formula for the Laplace transform and the principle of analytic continuation,
that
\begin{align}\label{delarno}
\notag \int^{\infty}_{0} \frac{e^{-\sqrt{\lambda c_{0}^{-1}r^{-1}e^{-i\theta}}s}}{\sqrt{\lambda}}f_{1}(x-s)\, ds+\int^{\infty}_{0}\frac{e^{-\sqrt{\lambda c_{0}^{-1}r^{-1}e^{-i\theta}}s}}{\sqrt{\lambda}}f_{1}(x+s)\, ds+f(\lambda ;x)=0,
\end{align}
provided $x>0$ and $\Re \lambda>0.$
Denote $F(t):={\mathcal L}^{-1}(1/(c_{0} \lambda+\beta))(t),$ $t\geq 0.$ Then the uniqueness theorem for the Laplace transform and an elementary argumentation show that
$$
f_{1}(x-s)+f_{1}(x+s)+2\beta \int^{s}_{0}F(s-r)f_{1}(r)\, dr=f_{1}(s-x),\quad
$$
provided that $x,\ s>0$ and $s\neq x.$
Letting $s\rightarrow 0+,$ for fixed $x>0,$ we get $f(x)=0,$ so that ${\mathcal G}_{r,\theta}$ satisfies (C.S.2) with $C=I.$ Hence, we have proved that  ${\mathcal G}_{r,\theta}$ is an (EUDS) generated by $A_{r,\theta}\equiv re^{i\theta}A.$
On the other hand, E. Borel's theorem (\cite{meise}) implies that the operator $A_{r,\theta}$ is not stationary dense. By \cite{statio}, we get that there is no $n\in {\mathbb N}_{0}$ such that $A$ is the generator of a (local) $n$-times integrated semigroup on $E$. This is an improvement of the corresponding result from \cite{segedin}, where we have only proved that the operator $A_{1,0}$ cannot be the generator of an exponentially equicontinuos $n$-integrated semigroup for some $n\in {\mathbb N}_{0}$.
The question whether there exists an injective operator $C\in L(E)$ such that $A$
is the integral generator of an exponentially equicontinuous
$C$-regularized semigroup on $E$ has been proposed in \cite{segedin}. Now we will answer this question in the affirmative. Let $1<s'<s.$ Put
\begin{equation}\label{steve}
S_{r,\theta}(t)f:=\frac{1}{2\pi i}\int \limits_{\bar{a}-i\infty}^{\bar{a}+i\infty}e^{\lambda t}\frac{\bigl(\lambda -A_{r,\theta}\bigr)^{-1}f}{\omega^{(p!^{s'})}(i\lambda)}\, d\lambda,\quad t\geq 0,\ f\in E,
\end{equation}
$C_{r,\theta}:=S_{r,\theta}(0)$ and $C:=C_{1,0}.$
By the proof of Theorem \ref{grace}, it readily follows that $(S_{r,\theta}(t))_{t\geq 0}$ is an exponentially equicontinuous $C_{r,\theta}$-regularized semigroup generated by $A_{r,\theta}$ and that
$C_{r,\theta}(D_{\infty}(A))\subseteq E^{(p!^{s})}(A).$ The straightforward integral computation involving (\ref{faithdj}) shows that
for each $k\in {\mathbb N}_{0}$ there exists a constant $c_{k,r,\theta,s,s'}>0$ such that $\|S_{r,\theta}(t)f\|_{k}\leq c_{k,r,\theta,s,s'}\|f\|_{k},$ $f\in E,$ $t\geq 0,$
so that
$(S_{r,\theta}(t))_{t\geq 0}$ is an equicontinuous $C_{r,\theta}$-regularized semigroup generated by $A_{r,\theta}.$

Denote $q(\lambda)f=(I-A)^{-1}Cf$, for $f\in E$. We need to show that the family $\{{\lambda}q({\lambda})\,:\,\lambda\in\sum_{\frac{\pi}{2}+\gamma}\}$, $\gamma\in(0,\frac{\pi}{2}-|\theta |)$ is equicontinuous and $\lim_{|\lambda|\rightarrow\infty}{\lambda}q({\lambda})f=Cf$, $f\in E$. The first part is a consequence of a simple integral computation involving the resolvent equation and Cauchy theorem, while the second part follows from the representation formula (\ref{steve}). Now, the conditions of \cite[Theorem 3.7]{sic} are fulfilled, so by its application we obtain that $(S_{r,\theta}(t))_{t\geq 0}$ is an equicontinuous analytic $C_{r,\theta}$-regularized semigroup of angle $\frac{\pi}{2}-|\theta |,$ generated by $A_{r,\theta}$.
%

Observe finally that ${\mathcal G}_{r,\theta}$ is an analytic (UDS) of angle $(\pi/2)-|\theta|.$ We can prove with the help of (\ref{buda-bar})
that $G_{r,\theta}(t)\in L(E)$ for all $t>0,$ as well as that
\begin{align*}
G_{r,\theta}(t)f(x)&=\frac{1}{2\sqrt{\pi c_{0}re^{i\theta}}}t^{(-1)/2}\int \limits^{x}_{0}e^{-s^{2}e^{i\theta}/4c_{0}tr}f(x-s)\, ds \, dt
\\& +\frac{1}{2\sqrt{\pi c_{0}re^{i\theta}}} t^{(-1)/2}\int \limits^{\infty}_{0}e^{-s^{2}e^{i\theta}/4c_{0}tr}f(x+s)\, ds
\\& +\frac{1}{2\sqrt{c_{0}re^{i\theta}}} \frac{1}{2\pi i}\int\limits_{\bar{a}-i\infty}^{\bar{a}
+i\infty} e^{\lambda t}f(\lambda;x)\, d\lambda,\quad x\geq 0,\ t>0.
\end{align*}
Now it is clear that the mapping $t\mapsto G_{r,\theta}(t)\in L(E),$ $t>0$ can be analytically extended to the sector $\Sigma_{(\pi/2)-|\theta|},$ as claimed.

Observe, finally, that we can employ the obtained results in the analysis of
the control problem for a one-dimensional heat equation for materials
with memory (cf. \cite[pp. 146-147]{prus})\begin{equation}\label{heqV}
u(t)=A\int^{t}_{0}u(s)\, ds+(a\ast i)(t)f(\cdot),\quad\quad t\geq 0.\end{equation}
Let the function $a(t)$ is Laplace transformable, abs$(a)=0$
and let the analytic function $\hat{a} : {\mathbb C} \setminus
(-\infty,0] \rightarrow {\mathbb C} \setminus (-\infty,0]$ satisfy that
$\hat{a}(\lambda)=\tilde{a}(\lambda),$ $\Re \lambda>0,
$ as well as that for each number $\epsilon\in (0,\pi)$ there exists a number $\delta(\epsilon)\in (0,\pi)$ such that
$\hat{a}(\Sigma_{\pi-\varepsilon}) \subseteq \Sigma_{\pi-\delta(\epsilon)}$
and $\lim_{|\lambda| \rightarrow \infty}|\hat{a}(\lambda)|=0.$
Using again \cite[Theorem 3.7]{sic}, we
obtain that the operator $A$
is the integral generator of an equicontinuous analytic
$(a,C)$-regularized resolvent family $(R(t))_{t\geq 0}$ of
angle $\pi/2$ satisfying additionally that, for every $k\in
{{\mathbb N}_{0}}$ and $\epsilon \in (0,\pi),$ there exists
$c(k,\epsilon)'>0$ with $||R(z)f||_{k}\leq c(k,\epsilon)'||f||_{k},$
$z\in \Sigma_{\pi-\epsilon},$ $f\in E.$ Consider now the problem \cite[(5.68), p. 147]{prus}, with the function $i(t)$ being the intensity of the radiation and with the part $\alpha e^{-\alpha x}$ (which represents the absorption of the radiation in the material) replaced by a function $f(x)$ which belongs to the abstract Beurling space $E^{(p!^{s})}(A)$ for some $s>1.$ Let the function $(a\ast i)(t)$ belongs to $W^{1,1}_{loc}([0,\infty)).$ Since $A$
is the integral generator of an equicontinuous analytic
$(a,C)$-regularized resolvent family $(R(t))_{t\geq 0}$ of
angle $\pi/2$, the conditions of \cite[Theorem 2.6(ii)]{sic} are fulfilled, so (\ref{heqV}) has a unique strong solution which additionally satisfies $u(t)\in D_{\infty}(A),$ $t\geq 0.$
\end{example}

\end{document}